\newcommand{\ceil}[1]{\lceil #1 \rceil}
\DeclareMathOperator{\crit}{crit}
\renewcommand{\epsilon}{\varepsilon}
\renewcommand{\bar}{\overline}
\newcommand{\FF}{\mathbb{F}}
\newcommand{\floor}[1]{\lfloor #1 \rfloor}
\newcommand{\kk}{K}					
\newcommand{\norm}[1]{\left\Vert #1 \right\Vert}
\renewcommand{\phi}{\varphi}
\DeclareMathOperator{\lce}{lce}
\newcommand{\m}{\mathfrak{m}}			
\newcommand{\mult}{\mathcal{J}}			
\newcommand{\NN}{\mathbb{N}}
\newcommand{\ones}{\mathbf{1}}			
\newcommand{\RR}{\mathbb{R}}
\renewcommand{\SS}{\mathcal{S}}
\DeclareMathOperator{\trunc}{trunc}
\renewcommand{\vec}[1]{\mathbf{#1}}
\newcommand{\ZZ}{\mathbb{Z}}
\DeclareMathOperator{\ch}{char}
\newtheorem{thm}{Theorem}[section]
\newtheorem{lemma}[thm]{Lemma}
\newtheorem{prop}[thm]{Proposition}
\newtheorem{cor}[thm]{Corollary}
\newtheorem{proposition}[thm]{Proposition}
\newtheorem{corollary}[thm]{Corollary}
\theoremstyle{definition}
\newtheorem{defn}[thm]{Definition}
\newtheorem{algo}[thm]{Algorithm}
\newtheorem{observation}[thm]{Observation}
\newtheorem{example}[thm]{Example}
\newtheorem{sharp:exercise}[exercise]{${}^\sharp$Exercise}
\newtheorem{rmk}[thm]{Remark}
\newtheorem{question}[thm]{Question}
\newtheorem{convention}[thm]{Convention}
\newtheorem{notation}[thm]{Notation}
\newtheorem{definition}[thm]{Definition}
\newtheorem{remark}[thm]{Remark}
\numberwithin{equation}{section}
\numberwithin{figure}{section}
\author[C.A.~Francisco]{Christopher A. Francisco}
\address{Department of Mathematics, Oklahoma State University,
401 Mathematical Sciences, Stillwater, OK 74078}
\email{chris.francisco@okstate.edu}
\urladdr{\url{https://math.okstate.edu/people/chris/}}
\author[M.~Mastroeni]{Matthew Mastroeni}
\address{Department of Mathematics, Oklahoma State University,
401 Mathematical Sciences, Stillwater, OK 74078}
\email{mmastro@okstate.edu}
\urladdr{\url{https://mnmastro.github.io/}}
\author[J.~Mermin]{Jeffrey Mermin}
\address{Department of Mathematics, Oklahoma State University,
401 Mathematical Sciences, Stillwater, OK 74078}
\email{mermin@math.okstate.edu}     
\urladdr{\url{https://math.okstate.edu/people/mermin/}}   
\author[J.~Schweig]{Jay Schweig}
\address{Department of Mathematics, Oklahoma State University,
401 Mathematical Sciences, Stillwater, OK 74078}
\email{jay.schweig@okstate.edu}
\urladdr{\url{https://math.okstate.edu/people/jayjs/}}
\begin{document}

\title{
Computing Generalized Frobenius Powers of Monomial Ideals}
\date{}

\maketitle

\begin{abstract}
Generalized Frobenius powers of an ideal were introduced in \cite{Frobenius:powers} as characteristic-dependent analogs of test ideals. However, little is known about the Frobenius powers and critical exponents of specific ideals, even in the monomial case. We describe an algorithm to compute the critical exponents of monomial ideals, and use this algorithm to prove some results about their Frobenius powers and critical exponents. Rather than using test ideals, our algorithm uses techniques from linear optimization. 
\end{abstract}

\begin{spacing}{1.1}

\section{Introduction}

\paragraph{} Frobenius powers of an ideal with non-negative real-valued exponents were introduced in \cite{Frobenius:powers} as a characteristic-dependent analog of test ideals in F-finite regular domains of prime characteristic.  The motivation for defining Frobenius powers was to find a prime characteristic invariant sensitive enough to mimic a property of multiplier ideals in characteristic zero, namely that the multiplier ideal $\mult(I^\lambda)$ agrees with the multiplier ideal $\mult(f^\lambda)$ for a general $f \in I$.  In particular, it is known that $\mult(f^\lambda) = \mult(I^\lambda)$ when $I$ is the monomial ideal generated by the terms of $f$ \cite{suffgen}. Thus computations for arbitrary ideals can be reduced to the monomial case.

Let $S = \kk[x_1, \dots, x_m]$ be a standard graded polynomial ring over a field $\kk$ of characteristic $p > 0$.  We let $F: S \to S$ denote the Frobenius homomorphism of $S$.  Recall that a ring $S$ of characteristic $p$ is called \emph{F-finite} if $S$ is a finitely generated as a module over the subring $F(S) = S^p = \{ f^p \mid f \in S\}$.  We will assume throughout that $\kk$ is an F-finite field so that the polynomial ring $S$ is also F-finite.

The main result of our paper is Algorithm~\ref{algo:second}, a deterministic algorithm which computes all the critical exponents, and hence all the fractional Frobenius powers, of an arbitrary monomial ideal over K.  This algorithm does not involve test elements; instead, it uses analytic geometry, base $p$ arithmetic, and a generalization of long division.  The algorithm appears to be very efficient in characteristic 2 and 3, and slower in large characteristic.  Immediate corollaries to the algorithm are that all critical exponents are rational, and that the set of critical exponents is closed under multiplication by $p$.

Section~\ref{section:background} recalls necessary background on Frobenius powers and arithmetic in base $p$, and introduces notation that will be used throughout the paper, including the $(p,d)$-Sierpinski simplex, a fractal that we use to describe the Frobenius powers.
Section~\ref{section:algo} describes the algorithm for computing the critical exponents of a monomial ideal.  The algorithm is worked out in some detail for the ideal $(x^{2}y^{2},y^{3}z^{3})$ over $\mathbb{F}_{3}$ and $\mathbb{F}_{5}$ in the appendix. 

In Section~\ref{section:examples}, we use our techniques to compute some Frobenius powers and critical exponents in more generality.  Proposition~\ref{prop:squarecube} does this computation for $(x^{2}y^{2},y^{3}z^{3})$ in all characteristics simultaneously, demonstrating as a proof-of-concept that this is possible.  Proposition~\ref{prop:heightone} computes the least critical exponent for any height one monomial ideal containing a pure power.  We close with some questions for further research and an appendix that demonstrates Algorithm~\ref{algo:first}.

\section{Background on Frobenius Powers}\label{section:background}

\subsection{Frobenius Powers}

\paragraph{} Let $S$ be an F-finite standard graded polynomial ring as above.  In \cite{Frobenius:powers}, Hern\'andez-Teixeira-Witt define the \emph{Frobenius powers} of an ideal $I \subseteq S$ as a family of ideals $I^{[\lambda]}$ parametrized by a non-negative real number $\lambda$, which agree with the usual Frobenius powers $I^{[p^e]} = (f^{p^e} \mid f \in I)$ when $\lambda = p^e$.  As one might hope, this family of ideals has good containment properties:

\begin{prop}[{\cite[3.16]{Frobenius:powers}}]
Let $I, J \subseteq S$ be ideals, and let $\lambda, \mu \in \RR_{\geq 0}$.  Then:
\begin{enumerate}[label = \textnormal{(\alph*)}]
\item \textnormal{(Monotonicity)} If $\lambda < \mu$, then $I^{[\lambda]} \supseteq I^{[\mu]}$.
\item \textnormal{(Right Constancy)}  For every $\lambda$, there exists an $\epsilon > 0$ such that $I^{[\mu]} = I^{[\lambda]}$ whenever $\lambda \leq \mu < \lambda + \epsilon$.
\item $I^{[\lambda]}I^{[\mu]} \supseteq I^{[\lambda + \mu]}$
\item For any ideal $J \subseteq S$, we have $I^{[\lambda]}J^{[\lambda]} \supseteq (IJ)^{[\lambda]}$.
\end{enumerate}
\end{prop}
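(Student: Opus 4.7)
The plan is to treat each containment first at rational exponents of the form $\lambda = a/p^e$, then extend to real exponents by an approximation argument. Recall (or take as definition) that for such $\lambda$ the Frobenius power $I^{[\lambda]}$ is the unique smallest ideal $J \subseteq S$ with $J^{[p^e]} \supseteq I^a$; this exists because $S$ is F-finite and regular, so taking ordinary Frobenius roots is well-behaved under intersection. For real $\lambda$, $I^{[\lambda]}$ is realized as a limit of the rational case along base-$p$ truncations of $\lambda$.

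For part (a), suppose first that $\lambda = a/p^e < \mu = b/p^e$ share a common $p$-power denominator. Then $a < b$ and $I^a \supseteq I^b$, so any $J$ with $J^{[p^e]} \supseteq I^a$ automatically satisfies $J^{[p^e]} \supseteq I^b$; minimality of $I^{[\mu]}$ then gives $I^{[\mu]} \subseteq I^{[\lambda]}$. For real $\lambda < \mu$, I would interpose rationals $\lambda \leq r_1 < r_2 \leq \mu$ with a common $p$-power denominator, apply the rational case, and pass to the limit. Part (b) follows quickly: by (a), as $\mu$ decreases to $\lambda$, the ideals $I^{[\mu]}$ form an ascending chain in the Noetherian ring $S$, hence stabilize at some ideal $J$; comparing with the base-$p$ definition identifies $J$ with $I^{[\lambda]}$.

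For parts (c) and (d), I would again start with rationals $\lambda = a/p^e$ and $\mu = b/p^e$ on a common denominator. Since the ordinary Frobenius operation $(-)^{[p^e]}$ distributes over products,
\[
\bigl(I^{[\lambda]} I^{[\mu]}\bigr)^{[p^e]} \;=\; \bigl(I^{[\lambda]}\bigr)^{[p^e]} \bigl(I^{[\mu]}\bigr)^{[p^e]} \;\supseteq\; I^a I^b \;=\; I^{a+b},
\]
and minimality of $I^{[(a+b)/p^e]}$ forces $I^{[\lambda]} I^{[\mu]} \supseteq I^{[\lambda + \mu]}$, giving (c). The argument for (d) is identical, using $(IJ)^a = I^a J^a$. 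The real-exponent case in each part follows from the rational case together with right constancy.

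The principal obstacle is making the rational-to-real passage rigorous: each containment must persist under the base-$p$ limit defining $I^{[\lambda]}$ for real $\lambda$. Right constancy, once established, is the technical lever for this transition, since it allows one to replace a real exponent by a sufficiently close rational without altering the ideal.
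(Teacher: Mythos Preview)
The paper does not prove this proposition; it is stated with a citation to \cite[3.16]{Frobenius:powers} and no argument is given, so there is no in-paper proof to compare your proposal against.

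Your sketch has a genuine gap in the rational case. You take as working definition that $I^{[a/p^e]}$ is the least ideal $J$ with $J^{[p^e]}\supseteq I^a$, using the \emph{ordinary} power $I^a$. But the actual definition, recalled just after this proposition in the paper, is $I^{[a/p^e]}=(I^{[a]})^{[1/p^e]}$, where the integer Frobenius power $I^{[a]}=I^{a_0}(I^{a_1})^{[p]}\cdots(I^{a_r})^{[p^r]}$ is built from the base-$p$ digits of $a$. These genuinely differ: for $I=(x,y)\subseteq \kk[x,y]$ in characteristic~$2$ one has $I^{[2]}=(x^2,y^2)\subsetneq I^2=(x^2,xy,y^2)$, and taking $[1/2]$-roots gives $(x,y)$ versus the unit ideal. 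In particular your proposed description is not even well-defined, since it assigns different ideals to the representations $1/1$ and $2/2$ of the same exponent. Once the correct $I^{[a]}$ is inserted, the trivial identities you invoke for ordinary powers---$I^a\supseteq I^b$ for $a<b$, $I^aI^b=I^{a+b}$, and $(IJ)^a=I^aJ^a$---become precisely the integer cases of (a), (c), and (d) that require proof, and none of them is automatic; for (c), for instance, one must analyze what happens to the digit-by-digit product when the base-$p$ addition $a+b$ involves carries. Your overall strategy of settling the rational case first and then passing to real $\lambda$ via right constancy is sound in outline, but the rational case is where the content lies, and it is not handled here.
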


\vspace{1 ex}

Similar to jumping numbers and F-jumping numbers of multiplier ideals and test ideals, we are interested in determining the Frobenius powers of various monomial ideals $I$ and the real numbers $\lambda > 0$ such that $I^{[\mu]} \neq I^{[\lambda]}$ for all $\mu < \lambda$, which are called the \emph{critical exponents} of $I$.  In particular, there is a smallest positive critical exponent by right constancy; it is called the \emph{least critical exponent} of $I$ and is denoted by $\lce(I)$.

In the remainder of this subsection, we summarize the stages by which generalized Frobenius powers are constructed, and we make some simple observations that simplify the case of monomial ideals to working over $\FF_p$.  Given an ideal $I \subseteq S$ and $\lambda \in \RR_{\geq 0}$, the Frobenius power $I^{[\lambda]}$ is constructed as follows:
\begin{itemize}
\item If $\lambda = k$ is an integer with base $p$ expansion $k = k_0 + k_1p + \cdots k_rp^r$, then 
\[ I^{[k]} = I^{k_0}(I^{k_1})^{[p]} \cdots (I^{k_r})^{[p^r]} \]
\item If $\lambda = \frac{k}{q} \in \ZZ[\frac{1}{p}]_{\geq 0}$ is a non-negative $p$-adic rational, we define $I^{[\frac{k}{q}]} = (I^{[k]})^{[\frac{1}{q}]}$, where for any ideal $J$, the ideal $J^{[1/q]}$ is the smallest ideal $L$ such that $L^{[q]} \supseteq J$ as originally defined in \cite{eth:roots}.  In practice, because we are ultimately interested in ideals $J$ in a polynomial ring over $\FF_p$, the ideal $J^{[\frac{1}{q}]}$ is always easily computable by \cite[2.5]{eth:roots}.
\item For any real number $\lambda \geq 0$, the Frobenius power $I^{[\lambda]}$ is then defined by taking any monotone decreasing sequence $(\lambda_j)$ of $p$-adic rationals converging to $\lambda$ from above.  The monotonicity of Frobenius powers then yields an ascending chain of ideals $I^{[\lambda_1]} \subseteq I^{[\lambda_2]} \subseteq \cdots$, and $I^{[\lambda]}$ is defined to be the stable value of this chain, which exists since $S$ is Noetherian.  
\end{itemize}
In particular, every real Frobenius power is the Frobenius power of some $p$-adic rational.

\begin{prop} \label{prop:twopart}
Let $\phi: S \to T$ be a ring homomorphism between F-finite regular domains, $I \subseteq S$ be an ideal, and $\lambda \in \RR_{\geq 0}$.  Then:
\begin{enumerate}[label = \textnormal{(\alph*)}]
\item $(IT)^{[\lambda]} \subseteq I^{[\lambda]}T$, with equality if $\lambda$ is an integer.
\item If in addition $S$ is free over $S^q$ with basis $e_1, \dots, e_n$, $T$ is free over $T^q$, and $\phi(e_1), \dots, \phi(e_n)$ are part of a basis for $T$ over $T^q$, then $(IT)^{[\frac{k}{q}]} = I^{[\frac{k}{q}]}T$.\end{enumerate}
\end{prop}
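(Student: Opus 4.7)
The plan is to walk through the three stages of increasing generality in the construction of $I^{[\lambda]}$ — integer, $p$-adic rational, real — for part (a), and then to upgrade the containment to equality in part (b) by analyzing the $1/q$-th root operation explicitly via the freeness hypothesis. For integer $\lambda = k$ with base $p$ expansion $k_0 + k_1 p + \cdots + k_r p^r$, expand both sides using the product formula for integer Frobenius powers. Extension along $\phi$ commutes with products of ideals and with each operation $J \mapsto J^{[p^i]}$, since $J^{[p^i]}$ is generated by the $p^i$-th powers of any generating set of $J$, whose images under $\phi$ generate $(JT)^{[p^i]}$. Thus $(IT)^{[k]}$ and $I^{[k]}T$ reduce identically, giving equality.

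For a $p$-adic rational $\lambda = k/q$, set $L = (I^{[k]})^{[1/q]}$, so by definition $L^{[q]} \supseteq I^{[k]}$. Extending to $T$ and applying the integer case gives $(LT)^{[q]} = L^{[q]} T \supseteq I^{[k]} T = (IT)^{[k]}$. By the minimality characterization of the $1/q$-th root, $(IT)^{[k/q]} = ((IT)^{[k]})^{[1/q]} \subseteq LT = I^{[k/q]} T$. For a general real $\lambda$, choose a monotone decreasing sequence $\lambda_j \downarrow \lambda$ of $p$-adic rationals; the ascending chains $\{(IT)^{[\lambda_j]}\}$ and $\{I^{[\lambda_j]} T\}$ stabilize at $(IT)^{[\lambda]}$ and $I^{[\lambda]} T$ respectively, and the inclusions $(IT)^{[\lambda_j]} \subseteq I^{[\lambda_j]} T$ pass to the stable values.

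For part (b), the freeness hypothesis is precisely what is needed to make $(\cdot)^{[1/q]}$ commute with extension along $\phi$. By \cite[2.5]{eth:roots}, if $f \in S$ has the unique expansion $f = \sum_{i=1}^n g_i^q e_i$ in the $S^q$-basis $e_1, \dots, e_n$, then the $g_i$ (as $f$ ranges over any generating set of $J$) generate $J^{[1/q]}$. Pushing forward, $\phi(f) = \sum_i \phi(g_i)^q \phi(e_i)$; since $\phi(e_1), \dots, \phi(e_n)$ extend to a $T^q$-basis of $T$, this is part of the unique expansion of $\phi(f)$, with all remaining coefficients zero. Hence $(JT)^{[1/q]} = J^{[1/q]} T$ for every $J \subseteq S$. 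Applying this with $J = I^{[k]}$ and combining with the integer case of (a) gives $(IT)^{[k/q]} = I^{[k/q]} T$. I expect the main subtlety to lie in the $p$-adic rational step of (a): minimality yields only a one-sided inclusion, and the reverse containment genuinely fails without the freeness hypothesis of (b), so one must resist trying to prove equality prematurely.
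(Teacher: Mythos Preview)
Your proof is correct and follows essentially the same approach as the paper: equality for integer exponents via commutation of $\phi$ with ordinary powers and bracket powers, the one-sided containment for $p$-adic rationals via the minimality characterization of $(\cdot)^{[1/q]}$, passage to the limit for real $\lambda$, and the upgrade to equality in (b) using the explicit generators of $J^{[1/q]}$ from \cite[2.5]{eth:roots} under the basis hypothesis. Your exposition is somewhat more detailed than the paper's (e.g., spelling out the product formula in the integer case and the stabilization of both chains in the real case), but the argument is identical.
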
 

\begin{proof}
(a) If $\lambda = k$ is an integer, it is clear that $(IT)^{[k]} = I^{[k]}T$ since homomorphisms preserve both ordinary powers and $p$-th power Frobenius powers.
As $(I^{[\frac{k}{q}]})^{[q]} \supseteq I^{[k]}$, we have $(I^{[\frac{k}{q}]}T)^{[q]} = (I^{[\frac{k}{q}]})^{[q]}T \supseteq I^{[k]}T = (IT)^{[k]}$ so that $ I^{[\frac{k}{q}]}T \supseteq (IT)^{[\frac{k}{q}]}$.  The claim then follows for arbitrary $\lambda$ by applying the preceding inclusions to a monotone decreasing sequence of $p$-adic rationals converging to $\lambda$.

(b) By the previous part, it is enough to show that $I^{[\frac{1}{q}]}T = (IT)^{[\frac{1}{q}]}$ for any ideal $I \subseteq S$.  For $f \in I$, write $f = \sum_i f_i^qe_i$, so $\phi(f) = \sum_i \phi(f_i)^q\phi(e_i)$. By \cite[2.5]{eth:roots}, both the ideals $(IT)^{[\frac{1}{q}]}$ and $I^{[\frac{1}{q}]}T$  are generated by all elements of the form $\phi(f_i)$ for some $f \in I$.
\end{proof}

\begin{rmk}
It is worth noting two important cases in which one can apply the second part of Proposition~\ref{prop:twopart}:

\begin{itemize}
\item $T$ is obtained from $S$ by extension of the ground field.  This reduces computations of the Frobenius powers of monomial ideals to computations over $\FF_p$.

\item $S = \FF_p[x_1, \dots, x_m]$, $T = \FF_p[y_1, \dots, y_s]$, and $\phi(x_1), \dots, \phi(x_m)$ are square-free monomials with disjoint supports. In this case, for any $\vec{x}^\vec{a} = x_1^{a_1} \cdots x_m^{a_m}$ with $a_i < q$ for all $i$, we have $\phi(\vec{x}^\vec{a}) = \vec{y}^\vec{b}$ for some $\vec{b} \in \NN^m$ with $b_j < q$ for all $j$ by assumption so that $\phi(\vec{x}^\vec{a})$ is part of the monomial basis for $T$ over $T^q$.
\end{itemize}
\end{rmk}

\subsection{Frobenius Powers of Monomial Ideals}

\paragraph{}  In this subsection, we fix the notation used throughout the rest of the paper and make some simple observations about the Frobenius powers of monomial ideals.

\begin{notation}\label{n:vectors}
If $\vec{x}^\vec{b} = x_{1}^{b_{1}}\dots x_{m}^{b_{m}}$ is a monomial of $S$, we say that $\vec{b} = (b_1, \dots, b_m) \in \NN^m$ is the \emph{exponent vector} of $\vec{x}^\vec{b}$.  Let $I = (\vec{x}^{\vec{a}_1}, \dots, \vec{x}^{\vec{a}_n})$ be a proper monomial ideal in $S$, and let $A = (\vec{a}_1 | \cdots | \vec{a}_n)$ be the $m \times n$ matrix whose columns are the exponent vectors of the generating monomials of $I$.  For any $\vec{u}=(u_1,\dots, u_n) \in \NN^n$ and $k \in \NN$, we set $\norm{\vec{u}} = \sum_i u_i$, and we recall that the \emph{multinomial coefficient} $\binom{k}{\vec{u}}$ is equal to $\frac{k!}{u_1! u_2!\cdots u_n!}$ if $\norm{\vec{u}} = k$ and is equal to zero otherwise. 
\end{notation} 

\begin{convention}
We adapt operations on numbers to vectors $\vec{u} \in \RR^n$ by applying the operation to each coordinate.  For example, $\floor{\vec{u}} = (\floor{u_1}, \dots, \floor{u_n})$ is the vector obtained by taking the floor of each component.  We write $\vec{u} \leq \vec{v}$ to mean $u_i \leq v_i$ for all $i$ and $\vec{u} \prec \vec{v}$ to mean $u_i < v_i$ for all $i$.
\end{convention}

\begin{prop} \label{t:keybackground} \label{monomial:Frobenius:powers}  \label{monomial:Frobenius:power:generators}
Let $I \subseteq S$ be a monomial ideal. Then with the notation above: 
\[ I^{[\frac{k}{q}]} = ( \vec{x}^{\floor{\frac{A\vec{u}}{q}}} \mid \vec{u} \in \NN^n, \norm{\vec{u}} = k, {\textstyle \binom{k}{\vec{u}}} \not\equiv 0 \!\!\! \pmod{p} ) \]
\end{prop}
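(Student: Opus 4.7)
The plan is to handle the integer case first via the defining product formula and then to extend to $p$-adic rationals using $I^{[k/q]} = (I^{[k]})^{[1/q]}$. First, I would apply the ground-field-extension case of Proposition~\ref{prop:twopart}(b) to reduce to $\kk = \FF_p$, since a monomial ideal extended from $\FF_p$ to $\kk$ has the same monomial generators and its Frobenius powers are preserved under this extension.

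For the integer case, write $k = k_0 + k_1 p + \cdots + k_r p^r$ in base $p$, so that $I^{[k]} = I^{k_0}(I^{k_1})^{[p]} \cdots (I^{k_r})^{[p^r]}$. Since $I$ is monomial, the ordinary power $I^{k_i}$ is generated by the monomials $\vec{x}^{A\vec{u}^{(i)}}$ with $\vec{u}^{(i)} \in \NN^n$ and $\norm{\vec{u}^{(i)}} = k_i$, so $(I^{k_i})^{[p^i]}$ is generated by $\vec{x}^{p^i A \vec{u}^{(i)}}$. Multiplying across $i$, the ideal $I^{[k]}$ is generated by the monomials $\vec{x}^{A\vec{u}}$ for which $\vec{u}$ admits a decomposition $\vec{u} = \sum_{i=0}^r p^i \vec{u}^{(i)}$ with $\norm{\vec{u}^{(i)}} = k_i$ for every $i$. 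The pivotal observation is that since $0 \leq k_i < p$ and $\vec{u}^{(i)} \geq 0$ with $\norm{\vec{u}^{(i)}} = k_i$, every coordinate satisfies $u_j^{(i)} \leq k_i < p$. Hence $u_j = \sum_i p^i u_j^{(i)}$ is forced to be the base $p$ expansion of $u_j$, and the existence of such a decomposition amounts to the condition $\sum_j u_j^{(i)} = k_i$ for every $i$, i.e., to there being no carries in the base $p$ sum $u_1 + \cdots + u_n = k$. By the multinomial form of Lucas' theorem (equivalently, Kummer's theorem), this is exactly the condition $\binom{k}{\vec{u}} \not\equiv 0 \pmod{p}$.

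For the $p$-adic rational case, I would combine $I^{[k/q]} = (I^{[k]})^{[1/q]}$ with the monomial case of \cite[2.5]{eth:roots}: for any monomial ideal $J = (\vec{x}^{\vec{c}_1}, \dots, \vec{x}^{\vec{c}_s})$ over $\FF_p$, the $q$-th root $J^{[1/q]}$ is generated by the monomials $\vec{x}^{\floor{\vec{c}_j/q}}$. Applying this to the generating set of $I^{[k]}$ obtained in the previous step immediately yields the stated formula. The main hurdle is the combinatorial identification in the integer step; the bound $k_i < p$ is doing the essential work, since it is precisely what forces each decomposition $\vec{u} = \sum_i p^i \vec{u}^{(i)}$ to coincide with the coordinate-wise base $p$ expansion, so that Kummer/Lucas applies without any further reduction argument. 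Once this is noted, the rest is routine bookkeeping.
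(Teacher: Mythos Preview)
Your proof is correct and follows essentially the same two-step strategy as the paper: first describe $I^{[k]}$ as generated by the monomials $\vec{x}^{A\vec{u}}$ with $\norm{\vec{u}}=k$ and $\binom{k}{\vec{u}}\not\equiv 0\pmod p$, then apply \cite[2.5]{eth:roots} to pass to $I^{[k/q]}$ via the floor formula. The only real difference is that the paper simply cites \cite[3.5]{Frobenius:powers} for the integer step, whereas you derive it directly from the defining product $I^{[k]}=\prod_i (I^{k_i})^{[p^i]}$ together with Kummer/Lucas; your observation that $u_j^{(i)}\le k_i<p$ forces the decomposition to be the coordinatewise base-$p$ expansion is exactly the content of that cited result, so you have effectively reproduced its proof. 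Your initial reduction to $\kk=\FF_p$ via Proposition~\ref{prop:twopart}(b) is a nice bit of extra care that the paper's proof glosses over (its claim that the monomials $\vec{x}^{\vec b}$ with $\vec b\prec q\ones$ form an $S^q$-basis is literally true only when $\kk$ is perfect, though the monomial computation goes through regardless).
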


\begin{proof}
By definition, $I^{[\frac{k}{q}]} = (I^{[k]})^{[\frac{1}{q}]}$ where 
\[ I^{[k]} = ( \vec{x}^{A\vec{u}} \mid \vec{u} \in \NN^n, \norm{\vec{u}} = k, {\textstyle \binom{k}{\vec{u}}} \not\equiv 0 \!\!\! \pmod{p} ) \]
by \cite[3.5]{Frobenius:powers}.  Since the monomials $\vec{x}^{\vec{b}}$ with $\vec{b} \prec q\ones$ form a basis for $S$ as a free $S^q$-module, we can compute the $q$-th root Frobenius power of an ideal $J = (f_1, \dots, f_n) \subseteq S$ as  $J^{[\frac{1}{q}]} = (f_{i, \vec{b}} \mid f = \sum_{\vec{b} \prec q\ones} f_{i,\vec{b}}^q\vec{x}^\vec{b})$ by \cite[2.5]{eth:roots}.  Applying this description to $I^{[k]}$ yields the claimed description for $I^{[\frac{k}{q}]}$.
\end{proof}

\begin{cor} \label{c:monomial:Frobenius:powers}
The Frobenius powers $I^{[\lambda]}$ of a monomial ideal $I \subseteq S$ are monomial ideals. 
\end{cor}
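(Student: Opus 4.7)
The plan is to bootstrap Corollary \ref{c:monomial:Frobenius:powers} directly from Proposition \ref{monomial:Frobenius:power:generators} and the definition of real Frobenius powers recalled in Section \ref{section:background}. The $p$-adic rational case is essentially free, and the real case reduces to the observation that a chain of monomial ideals in a Noetherian ring stabilizes at a monomial ideal.

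First I would dispose of the case when $\lambda = \frac{k}{q} \in \ZZ[\frac{1}{p}]_{\geq 0}$. By Proposition \ref{monomial:Frobenius:power:generators}, the ideal $I^{[\frac{k}{q}]}$ is generated by the monomials $\vec{x}^{\floor{A\vec{u}/q}}$ as $\vec{u}$ ranges over those elements of $\NN^n$ with $\norm{\vec{u}} = k$ and $\binom{k}{\vec{u}} \not\equiv 0 \pmod{p}$, so it is visibly a monomial ideal.

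For arbitrary $\lambda \geq 0$, I would invoke the definition of $I^{[\lambda]}$ recalled earlier: choose any monotone decreasing sequence $(\lambda_j)$ of non-negative $p$-adic rationals with $\lambda_j \searrow \lambda$. By monotonicity this produces an ascending chain
\[ I^{[\lambda_1]} \subseteq I^{[\lambda_2]} \subseteq I^{[\lambda_3]} \subseteq \cdots, \]
and $I^{[\lambda]}$ is defined to be its stable value. Each $I^{[\lambda_j]}$ is a monomial ideal by the previous paragraph. Since $S$ is Noetherian, the chain stabilizes at some index $j_0$, so $I^{[\lambda]} = I^{[\lambda_{j_0}]}$, which is a monomial ideal.

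There is no real obstacle here; the only thing to be careful about is that the definition of $I^{[\lambda]}$ for real $\lambda$ genuinely produces an ideal equal to some $I^{[\lambda_j]}$ in the approximating sequence, rather than some honest limit construction that could a priori destroy the monomial property. That is precisely what the Noetherian stabilization in the third bullet of the definition of real Frobenius powers guarantees, so the corollary follows at once.
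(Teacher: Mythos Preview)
Your argument is correct and is exactly the approach the paper takes: the $p$-adic rational case is immediate from Proposition~\ref{monomial:Frobenius:power:generators}, and for real $\lambda$ the Noetherian stabilization in the definition guarantees that $I^{[\lambda]}$ coincides with $I^{[\lambda_{j_0}]}$ for some $p$-adic rational $\lambda_{j_0}$, hence is monomial. The paper compresses this into a single sentence (``every Frobenius power $I^{[\lambda]}$ agrees with the Frobenius power of some $p$-adic rational exponent''), but the content is identical.
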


\begin{proof}
This is immediate from the above proposition since every Frobenius power $I^{[\lambda]}$ agrees with the Frobenius power of some $p$-adic rational exponent.
\end{proof}

\begin{cor}
Let $I \subseteq S$ be a monomial ideal as above.  If $x_j^2$ does not divide any $\vec{x}^{\vec{a}_i}$, then for every $0 \leq \frac{k}{q}<1$, $x_j$ does not divide any generator of $I^{[\frac{k}{q}]}$.
\end{cor}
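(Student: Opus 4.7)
The plan is to invoke Proposition \ref{monomial:Frobenius:power:generators} directly and then bound the $j$-th coordinate of the relevant exponent vectors.

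First I would write, for each generator of $I^{[k/q]}$, the monomial $\vec{x}^{\floor{A\vec{u}/q}}$ where $\vec{u} \in \NN^n$ has $\|\vec{u}\| = k$ and $\binom{k}{\vec{u}} \not\equiv 0 \pmod p$. The claim reduces to showing that the $j$-th coordinate of $\floor{A\vec{u}/q}$ is zero, i.e.\ that $(A\vec{u})_j < q$.

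The key observation is that the hypothesis $x_j^2 \nmid \vec{x}^{\vec{a}_i}$ for every $i$ translates into the statement that the entire $j$-th row of $A$ consists of $0$'s and $1$'s. Consequently,
\[ (A\vec{u})_j \;=\; \sum_{i \,:\, a_{j,i} = 1} u_i \;\leq\; \sum_{i=1}^n u_i \;=\; \|\vec{u}\| \;=\; k. \]
Since $k/q < 1$ gives $k < q$, we obtain $(A\vec{u})_j \leq k < q$, hence $\floor{(A\vec{u})_j/q} = 0$, and $x_j$ does not appear in $\vec{x}^{\floor{A\vec{u}/q}}$.

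There is no real obstacle here: the statement follows immediately from the explicit generator description in Proposition \ref{monomial:Frobenius:power:generators} together with the elementary bound on the $j$-th coordinate of $A\vec{u}$. The only thing to be careful about is that the conclusion concerns $k/q$ strictly less than $1$, which is exactly what forces the floor of $(A\vec{u})_j/q$ to vanish; no use of the multinomial coefficient condition is needed.
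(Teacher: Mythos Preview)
Your proof is correct and follows essentially the same approach as the paper: invoke Proposition~\ref{monomial:Frobenius:power:generators}, observe that the $j$-th row of $A$ has entries at most $1$, and bound $(A\vec{u})_j \leq \norm{\vec{u}} = k < q$ to force the floor to vanish. The paper's argument is virtually identical, including the observation that the multinomial condition plays no role.
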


\begin{proof}
By the above proposition, the generators of $I^{[\frac{k}{q}]}$ have the form $\vec{x}^{\floor{\frac{A\vec{u}}{q}}}$, for some vector $\vec{u} \in \NN^n$ such that $\norm{\vec{u}} = k$. 
Since each $\vec{x}^{\vec{a}_i}$ is not divisible by $x_j^2$, the $j$-th row of $A$ contains no entries greater than one, and we have $(A\vec{u})_j \leq \norm{\vec{u}} = k < q$.  Hence, the exponent of $x_j$ is $\floor{\frac{(A\vec{u})_j}{q}} = 0$.
\end{proof}

\begin{cor}\label{cor:sqfree}
If $I \subseteq S$ is a monomial ideal that contains a squarefree monomial, then $\lce(I) = 1$.
\end{cor}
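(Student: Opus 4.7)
The plan is to use Proposition~\ref{monomial:Frobenius:powers} to show directly that $1 \in I^{[\lambda]}$ for every $0 \leq \lambda < 1$, so $I^{[\lambda]} = S$ on this range. Combined with $I^{[1]} = I \subsetneq S$ (assuming $I$ proper, which is forced when the squarefree monomial is nonconstant), this forces $1$ to be a critical exponent and rules out any smaller positive critical exponent.

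First I would reduce to $p$-adic rational exponents. Since every real Frobenius power is the stable value of $I^{[\lambda_j]}$ along a monotone decreasing sequence of $p$-adic rationals $\lambda_j \searrow \lambda$, and since $p$-adic rationals are dense in $\RR_{\geq 0}$, it suffices to verify $I^{[k/q]} = S$ for all $p$-adic rationals $k/q < 1$: given any real $\lambda < 1$, I can choose the approximating sequence $\lambda_j$ inside $(\lambda,1)$ so that every $I^{[\lambda_j]} = S$, forcing the stable value $I^{[\lambda]}$ to equal $S$ as well.

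The core step is then a single application of Proposition~\ref{monomial:Frobenius:powers}. After relabeling, assume $\vec{x}^{\vec{a}_1}$ is a squarefree generator of $I$, i.e.\ $\vec{a}_1 \in \{0,1\}^m$. Take $\vec{u} = k\vec{e}_1 = (k,0,\ldots,0) \in \NN^n$. Then $\norm{\vec{u}} = k$, the multinomial coefficient $\binom{k}{\vec{u}} = 1$ is nonzero mod $p$, and $A\vec{u} = k\vec{a}_1$ has every coordinate at most $k < q$. Thus $\floor{A\vec{u}/q} = \vec{0}$, so $1 = \vec{x}^{\vec{0}}$ is a generator of $I^{[k/q]}$, giving $I^{[k/q]} = S$.

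Combining these, $I^{[\lambda]} = S$ for all $0 \leq \lambda < 1$ while $I^{[1]} = I \neq S$, so $1$ is a critical exponent and is the least one. There is essentially no obstacle here; the only subtle point is making sure the passage from $p$-adic rationals to arbitrary reals $\lambda < 1$ is done correctly, which is immediate once one chooses the approximating sequence to lie in $(\lambda,1)$.
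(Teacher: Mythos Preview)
Your proof is correct and follows essentially the same approach as the paper: both arguments take $\vec{u}=k\vec{e}_1$ for the squarefree generator $\vec{x}^{\vec{a}_1}$, observe $\binom{k}{\vec{u}}=1$ and $\floor{A\vec{u}/q}=\vec{0}$, and conclude $1\in I^{[k/q]}$ for all $k/q<1$. Your added paragraph extending from $p$-adic rationals to arbitrary real $\lambda<1$ is a harmless elaboration the paper leaves implicit (and properness of $I$ is a standing assumption in the paper's notation).
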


\begin{proof}
If $\vec{x}^{\vec{a}_1}$ is a squarefree monomial generator of $I$ and $\vec{e}_1 \in \NN^n$ denotes the corresponding standard basis vector, then $I^{[\frac{k}{q}]}$ contains $\vec{x}^{\floor{\frac{A(k\vec{e}_1)}{q}}} = \vec{x}^{\floor{\frac{k\vec{a}_1}{q}}} = 1$.  Hence, $I^{[\frac{k}{q}]} = S$ for all $\frac{k}{q} < 1$.
\end{proof}

As a consequence of the above corollary, every squarefree monomial ideal has least critical exponent equal to one.  This is not surprising as the least critical exponent is supposed to be an analog of the F-pure threshold, and squarefree monomial ideals are F-pure.  However, a monomial ideal need not be F-pure in order to have least critical exponent equal to one.

\begin{example}
The ideal $I = (x^2, xy) \subseteq S = \kk[x, y]$ is not F-pure by Fedder's criterion \cite[1.12]{Fedder} since
\[ (I^{[p]}: I) = (x^{2p-2}, x^{p-2}y^p) \cap (x^{2p-1}, x^{p-1}y^{p-1}) = (x^{2p-1}, x^{2p-2}y^{p-1}, x^{p-1}y^p) \subseteq \m^{[p]} \]
However, $\lce(I) = 1$ because $I$ contains a square-free monomial.
\end{example}

\begin{defn}\label{d:lambdab}
For any monomial $\mathbf{x^{b}}\in S$, we define the \emph{critical exponent of $\mathbf{x^{b}}$} as
\[ \lambda_\vec{b}(I) = \sup\{ \lambda \in \RR_{\geq 0} \mid \vec{x}^\vec{b} \in I^{[\lambda]}\}. \]
Since $I^{[k]} \subseteq I^k$ is generated in degrees at least $k$, it is clear that $\vec{x}^\vec{b} \notin I^{[k]}$ for $k = \norm{\vec{b}} + 1$ so that the above supremum is always finite.  
\end{defn}

\begin{rmk}
We note that $\vec{x}^\vec{b} \notin I^{[\lambda_\vec{b}]}$ by the right constancy of Frobenius powers so that $\lambda_\vec{b}(I)$ is in fact a critical exponent of $I$.  By \cite[2.5]{Frobenius:powers:of:some:monomial:ideals}, the above definition coincides with what in that paper is called 
\[ \crit(I, \vec{b} + \ones) = \sup \{ \lambda \in \RR_{\geq 0} \mid I^{[\lambda]} \nsubseteq (x_1^{b_1+1}, \dots, x_m^{b_m + 1}) \} \]
In particular, we note that 
\[ \lce(I) = \lambda_\vec{0}(I) = \crit(I, \ones) = \sup\{ \lambda \in \RR_{\geq 0} \mid I^{[\lambda]} \nsubseteq (x_1, \dots, x_m) \} \]
and since $I^{[1]} = I \subseteq (x_1, \dots, x_m)$, it follows that $0 < \lce(I) \leq 1$.
\end{rmk}

\begin{prop}
Every critical exponent of a monomial ideal $I \subseteq S$ is of the form $\lambda_\vec{b}(I)$ for some $\vec{b} \in \NN^n$.
\end{prop}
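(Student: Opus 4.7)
The plan is to start from the definition: a critical exponent $\lambda$ of $I$ is precisely a real number with $I^{[\mu]} \neq I^{[\lambda]}$ for all $\mu < \lambda$, which by monotonicity is the same as $I^{[\mu]} \supsetneq I^{[\lambda]}$ for all $\mu < \lambda$. The goal is to exhibit a single monomial $\vec{x}^\vec{b}$ that simultaneously witnesses this strict containment for every such $\mu$, because then $\lambda_\vec{b}(I) = \lambda$ will follow.

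To produce such a monomial, I would combine Noetherianity of $S$ with Corollary~\ref{c:monomial:Frobenius:powers}. Pick any strictly increasing sequence $\mu_1 < \mu_2 < \cdots$ converging to $\lambda$ from below. Monotonicity gives a descending chain $I^{[\mu_1]} \supseteq I^{[\mu_2]} \supseteq \cdots$ of ideals in $S$, which stabilizes at some ideal $K = I^{[\mu_N]}$. A cofinality check (any $\mu < \lambda$ is bounded above by some $\mu_n$, and $\mu_N$ itself lies in the indexing set) shows that in fact $K = \bigcap_{\mu < \lambda} I^{[\mu]}$, and moreover $I^{[\mu]} = K$ for every $\mu_N \leq \mu < \lambda$. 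Since $\lambda$ is critical, $K = I^{[\mu_N]} \supsetneq I^{[\lambda]}$. By Corollary~\ref{c:monomial:Frobenius:powers} both $K$ and $I^{[\lambda]}$ are monomial ideals, so I may choose a monomial $\vec{x}^\vec{b} \in K \setminus I^{[\lambda]}$.

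To finish, I would verify $\lambda_\vec{b}(I) = \lambda$. For the lower bound, $\vec{x}^\vec{b} \in K \subseteq I^{[\mu]}$ for every $\mu < \lambda$, so the supremum defining $\lambda_\vec{b}(I)$ is at least $\lambda$. For the upper bound, monotonicity gives $I^{[\mu]} \subseteq I^{[\lambda]}$ whenever $\mu \geq \lambda$, and $\vec{x}^\vec{b} \notin I^{[\lambda]}$, so $\vec{x}^\vec{b} \notin I^{[\mu]}$ for any $\mu \geq \lambda$, forcing $\lambda_\vec{b}(I) \leq \lambda$.

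The only real obstacle is the passage from the pointwise condition ``for every $\mu < \lambda$ some monomial leaves $I^{[\lambda]}$'' to the uniform condition ``a single monomial works for all $\mu < \lambda$ at once.'' This is what the Noetherian stabilization argument provides; without it one would be stuck manufacturing a potentially incompatible family of monomials. Everything else is bookkeeping with the definition of $\lambda_\vec{b}(I)$ and monotonicity.
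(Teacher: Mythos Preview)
Your overall strategy—produce a single monomial $\vec{x}^\vec{b}$ lying in every $I^{[\mu]}$ for $\mu<\lambda$ but not in $I^{[\lambda]}$—is exactly the right one, and once such a monomial is in hand your verification that $\lambda_\vec{b}(I)=\lambda$ is correct. The gap is the step where you claim the descending chain $I^{[\mu_1]}\supseteq I^{[\mu_2]}\supseteq\cdots$ stabilizes ``by Noetherianity of $S$.'' Noetherianity gives the ascending chain condition, not the descending one; polynomial rings certainly admit infinite strictly descending chains of monomial ideals (for instance $(x)\supsetneq(x^2,xy)\supsetneq(x^2,xy^2)\supsetneq\cdots$ in $\kk[x,y]$). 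If the chain failed to stabilize, the intersection $K=\bigcap_{\mu<\lambda}I^{[\mu]}$ could in principle coincide with $I^{[\lambda]}$, and then there would be no monomial in $K\setminus I^{[\lambda]}$ to pick.

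What you actually need is that $I^{[\mu]}$ is eventually constant as $\mu\nearrow\lambda$, which is equivalent to saying that the critical exponents below $\lambda$ do not accumulate at $\lambda$. This is true, but it is a nontrivial input rather than a consequence of Noetherianity. The paper does not argue this from scratch here; instead it invokes \cite[2.6]{Frobenius:powers:of:some:monomial:ideals} to obtain that every critical exponent has the form $\crit(I,\vec{a})$ for some nonzero $\vec{a}$, and then uses the fact that $I^{[\lambda]}$ is a monomial ideal to rewrite $\crit(I,\vec{a})=\crit(I,\max(\vec{a},\ones))=\lambda_\vec{b}(I)$ with $\vec{b}=\max(\vec{a},\ones)-\ones$.
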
  

\begin{proof}
Although we do not assume that $I$ is $\m$-primary, the same argument as in \cite[2.6]{Frobenius:powers:of:some:monomial:ideals} shows that every critical exponent of $I$ is of the form $\crit(I, \vec{a})$, except that a priori we only have $\vec{a} \in \NN^n$ with $\vec{a} \neq \vec{0}$ instead of $\vec{a} \succ \vec{0}$.  However, since we know that the Frobenius power $I^{[\lambda]}$ is a monomial ideal by Corollary~\ref{c:monomial:Frobenius:powers}, it is easily seen that $I^{[\lambda]} \nsubseteq (x_1^{a_1}, \dots, x_n^{a_n})$ if and only if the monomial $m_\vec{a} = \prod_{a_i > 0} x^{a_i-1}$ is contained in $I^{[\lambda]}$.  Taking $\vec{\tilde{a}} = \max(\vec{a}, \ones)$, it is clear that $m_\vec{\tilde{a}} = m_\vec{a}$ so that $I^{[\lambda]} \nsubseteq (x_1^{a_1}, \dots, x_n^{a_n})$ if and only if $I^{[\lambda]} \nsubseteq (x_1^{\tilde{a}_1}, \dots, x_n^{\tilde{a}_n})$.  Hence, we have $\crit(I, \vec{a}) = \crit(I, \vec{\tilde{a}}) = \lambda_\vec{b}(I)$ for $\vec{b} = \vec{\tilde{a}} - \ones$.
\end{proof}

\begin{rmk} \label{rmk:skoda}
Due to their close relationship with test ideals of principal ideals, Skoda's Theorem for Frobenius powers \cite[3.17]{Frobenius:powers} implies that every critical exponent $\lambda > 0$ satisfies that $\lambda - \ceil{\lambda} + 1$ is also a critical exponent in the interval $(0, 1]$.  Consequently, we need only concern ourselves with finding critical exponents in this interval. We will recover this result using our techniques in Observation~\ref{obs:recovery}.
\end{rmk}

\subsection{Addition Base $p$ and the Sierpinski Simplex}

\paragraph{}  Our next task is to shed some light on the condition $\binom{k}{\vec{u}}\not\equiv 0 \pmod{p}$.  There is a useful interpretation in terms of the base $p$ expansion of $\vec{u}$.

\begin{lemma} \label{prop:basepaddition} 
Suppose $\norm{\vec{u}} = k$.  We have $\binom{k}{\vec{u}} \not\equiv 0 \pmod{p}$ if and only if the addition $\sum u_i = k$ involves no carries in base $p$.  That is, if we write $\vec{u} = (u_1, \dots , u_n)$ and write each $u_i = \sum_j v_{i,j}p^j$, then for all $j$ we have $\sum_i v_{i,j} < p$.
\end{lemma}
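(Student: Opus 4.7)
The plan is to reduce the claim to Legendre's classical formula for the $p$-adic valuation of a factorial, namely $v_p(n!) = \frac{n - s_p(n)}{p-1}$, where $s_p(n)$ denotes the base-$p$ digit sum of $n$.

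First I would apply this formula termwise to $\binom{k}{\vec{u}} = \frac{k!}{u_1! \cdots u_n!}$. Using $\norm{\vec{u}} = \sum_i u_i = k$, the linear-in-$n$ terms cancel and one obtains the clean identity
\[
v_p\!\binom{k}{\vec{u}} \;=\; \frac{1}{p-1}\!\left(\sum_{i=1}^n s_p(u_i) - s_p(k)\right).
\]
Thus $\binom{k}{\vec{u}} \not\equiv 0 \pmod p$ if and only if $s_p(k) = \sum_i s_p(u_i)$.

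The remaining step is to match this digit-sum equality with the carry-free condition. Writing $u_i = \sum_j v_{i,j} p^j$ and $k = \sum_j k_j p^j$ in base $p$, the hypothesis that every column sum satisfies $\sum_i v_{i,j} < p$ means the addition proceeds with no carries, so $k_j = \sum_i v_{i,j}$ for each $j$, and therefore $s_p(k) = \sum_{j} k_j = \sum_{i,j} v_{i,j} = \sum_i s_p(u_i)$. Conversely, if there is any column $j_0$ where $\sum_i v_{i,j_0} \geq p$, then performing the addition inductively from the least-significant digit upward, each such carry replaces a partial digit sum of $p$ (contributing $p$ to the running digit total on the summands' side) with $1$ carried to the next column and $0$ left behind (contributing $1$ to the running digit total of the result), a net decrease of $p-1$. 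By induction on the number of carries this gives $\sum_i s_p(u_i) = s_p(k) + (p-1) c$, where $c$ is the total number of carries, so equality of digit sums forces $c = 0$.

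The main obstacle is just bookkeeping this carry accounting carefully; once the Legendre formula is in hand, the biconditional is mechanical. I would likely relegate the carry-count identity $\sum_i s_p(u_i) = s_p(k) + (p-1)c$ to a short inductive argument on $\max_i u_i$ or on the number of digit positions involved, to keep the proof self-contained without invoking Kummer's theorem by name.
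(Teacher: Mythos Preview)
Your proof is correct and follows essentially the same route as the paper: both compute the $p$-adic valuation of $\binom{k}{\vec{u}}$ via Legendre's formula for $\nu_p(m!)$ and identify it with the total number of carries in the base-$p$ addition $\sum u_i = k$. The only cosmetic difference is that you pass through the digit-sum identity $\sum_i s_p(u_i) - s_p(k) = (p-1)c$, whereas the paper reaches $\nu_p\binom{k}{\vec{u}} = \sum_j c_j$ directly from the carry recursion $a_j = \sum_i v_{i,j} + c_{j-1} - p c_j$.
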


\begin{proof}
For an integer $m$, let $\nu_{p}(m)$ represent the number of times $m$ is divisible by $p$.  Then, if the base $p$ expansion of $m$ is $m=\sum c_{i}p^{i}$, we have $\nu_{p}(m!)=\sum_{i\neq 0}c_{i}(1+p+p^{2}+\dots + p^{i-1})$.

Now observe that $\nu_{p}\binom{k}{\mathbf{u}}=\nu_{p}(k!) - \sum \nu_{p}(u_{i}!)$.  Writing $k=\sum a_{j}p^{j}$ in base $p$, we have $a_{j}=\sum v_{i,j} + c_{j-1}-pc_{j}$, where $c_{j}$ is the number of carries in the $p^{j}$ place.  We compute $\nu_{p}\binom{k}{\mathbf{u}}=\sum c_{j}$.  Thus, $\binom{k}{\mathbf{u}}\equiv 0 \pmod{p}$ if and only if $\nu_{p}\binom{k}{\mathbf{u}}\geq 1$ if and only if there are carries in the addition.
\end{proof}

The description in terms of addition base $p$ allows us to translate from vectors of integers $\vec{u}$ with $\norm{\vec{u}} = k$ to vectors of $p$-adic rational numbers $\vec{u}$ with $\norm{\vec{u}} = \frac{k}{q}$.  Theorem \ref{t:keybackground} becomes the following:

\begin{prop}\label{t:basepaddition}
Let $I \subseteq S$ be a monomial ideal, $\vec{x}^\vec{b}$ be a monomial in $S$, and $\frac{k}{q} \in \ZZ[\frac{1}{p}]$.  Then $\vec{x}^\vec{b} \in I^{[\frac{k}{q}]}$ if and only if the set
\[ Q_{\frac{k}{q},\vec{b}}(I) = \{ \vec{u} \in \ZZ[\tfrac{1}{p}]^n : \norm{\vec{u}} = \tfrac{k}{q}, {\textstyle \sum u_i} \; \text{adds without carries}, \floor{A\vec{u}} \leq \vec{b}\}. \]
is nonempty.
\end{prop}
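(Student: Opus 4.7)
The plan is to derive this as a reparametrization of Proposition~\ref{t:keybackground} via the substitution $\vec{u} = \vec{v}/q$. Under this substitution, the set of non-negative integer vectors $\vec{v}$ with $\norm{\vec{v}} = k$ bijects with the set $\{\vec{u} \in \tfrac{1}{q}\NN^n : \norm{\vec{u}} = k/q\} \subseteq \ZZ[\tfrac{1}{p}]^n$; the equality $\floor{A\vec{v}/q} = \floor{A\vec{u}}$ is immediate; and Lemma~\ref{prop:basepaddition} identifies the condition $\binom{k}{\vec{v}} \not\equiv 0 \pmod{p}$ with the assertion that $\sum v_i = k$ adds without carries in base $p$.

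The crux of the argument, and the step I expect to need the most care, is the observation that multiplying or dividing a non-negative $p$-adic rational by a power of $p$ acts on its base-$p$ expansion by shifting digits without altering them. Consequently, the no-carry condition is invariant under such scalings: $\sum v_i = k$ adds without carries if and only if $\sum u_i = k/q$ does. I would isolate this as a short preliminary remark before the main argument, since it is precisely what allows us to move freely between integer and $p$-adic rational exponent vectors.

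With that in hand, the two implications are routine. For ($\Rightarrow$), apply Proposition~\ref{t:keybackground} to obtain $\vec{v} \in \NN^n$ with $\norm{\vec{v}} = k$, no carries in $\sum v_i = k$, and $\floor{A\vec{v}/q} \leq \vec{b}$, then set $\vec{u} = \vec{v}/q$ to produce an element of $Q_{k/q,\vec{b}}(I)$. For ($\Leftarrow$), given $\vec{u} \in Q_{k/q,\vec{b}}(I)$, let $q' = p^{e'}$ be a common denominator of the entries of $\vec{u}$ satisfying $q \mid q'$, set $\vec{v} = q'\vec{u} \in \NN^n$, and rewrite $k/q = k'/q'$ with $k' = k p^{e'-e}$. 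The digit-shift observation shows that $\sum v_i = k'$ is carries-free, so Proposition~\ref{t:keybackground} applied to the pair $(k',q')$ yields $\vec{x}^{\floor{A\vec{v}/q'}} = \vec{x}^{\floor{A\vec{u}}} \in I^{[k'/q']} = I^{[k/q]}$, and since this monomial divides $\vec{x}^\vec{b}$ by hypothesis, we conclude. The need to clear denominators up to some $q' \geq q$ is exactly why invariance of the no-carry condition under scaling by powers of $p$ is indispensable.
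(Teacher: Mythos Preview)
Your proposal is correct and follows essentially the same approach as the paper, which simply states that ``Theorem~\ref{t:keybackground} becomes the following'' after the digit-shift translation afforded by Lemma~\ref{prop:basepaddition}. If anything, you are more careful than the paper: you explicitly handle the case where an element of $Q_{k/q,\vec{b}}(I)$ has entries with denominator strictly larger than $q$, which the paper glosses over.
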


\begin{definition}\label{d:admissible}
  We say that a vector $\vec{u}\in \mathbb{R}^{n}$ is \emph{admissible} if it is possible to choose base $p$ representations of every entry $u_{i}$ in such a way that $\sum u_{i}$ adds without carries.
\end{definition}

\begin{remark}\label{r:admissible}
  Because $p$-adic fractions have two base $p$ representations, the definition of admissibility is subtler than it looks.  For example, if $p=2$, the vector $(\frac{1}{2}, \frac{1}{2})$ is admissible:  While the base 2 addition (.1 + .1 = 1) involves a carry, the addition $(.1 + .0\overline{1}=.\overline{1})$ does not.   
\end{remark}

The notion of admissibility allows us to begin describing the critical exponents of $I$ geometrically.  By Theorem \ref{t:basepaddition}, we have

\begin{prop} \label{p:lambdabmax}
For any exponent vector $\vec{b} \in \NN^n$, we have
\[ \lambda_\vec{b}(I)=\sup \{ \norm{\vec{u}} : \vec{u} \in \RR^n, \vec{u} \text{ is admissible}, \floor{A\vec{u}} \leq \vec{b} \} \]
\end{prop}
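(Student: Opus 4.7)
The plan is to prove the equality as two inequalities. Proposition \ref{t:basepaddition} will do the heavy lifting by translating membership $\vec{x}^\vec{b} \in I^{[k/q]}$ into the existence of a $p$-adic rational admissible vector $\vec{u}$ with $\norm{\vec{u}} = k/q$ and $\floor{A\vec{u}} \leq \vec{b}$. The two directions then amount to (1) reducing real exponents to $p$-adic rational ones and (2) approximating real admissible vectors by $p$-adic rational admissible ones via truncation.

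For the inequality $\lambda_\vec{b}(I) \leq \sup\{\norm{\vec{u}} : \vec{u} \text{ admissible}, \floor{A\vec{u}} \leq \vec{b}\}$, I would start with any $\lambda < \lambda_\vec{b}(I)$. Monotonicity gives $\vec{x}^\vec{b} \in I^{[\lambda]}$. Because $I^{[\lambda]}$ is defined as the stable value of an ascending chain indexed by a monotone decreasing sequence of $p$-adic rationals converging to $\lambda$, one can extract a $p$-adic rational $\mu > \lambda$ with $I^{[\mu]} = I^{[\lambda]}$. Applying Proposition \ref{t:basepaddition} to $\mu$ yields an admissible $\vec{u} \in \ZZ[\frac{1}{p}]^n$ of norm $\mu$ with $\floor{A\vec{u}} \leq \vec{b}$; viewing this $\vec{u}$ as a real admissible vector shows $\lambda < \mu \leq \text{RHS}$, and letting $\lambda \to \lambda_\vec{b}(I)^-$ completes this direction.

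For the reverse, I would fix an admissible $\vec{u} \in \RR^n_{\geq 0}$ with $\floor{A\vec{u}} \leq \vec{b}$ together with witnessing base $p$ representations $u_i = \sum_j v_{i,j} p^j$ (so $\sum_i v_{i,j} < p$ at every digit position). For each depth $J$, form the truncation $\vec{u}^{(J)} \in \ZZ[\frac{1}{p}]^n$ whose $i$-th coordinate is $\sum_{j \geq -J} v_{i,j} p^j$. Truncation deletes digits without altering the remaining ones, so the no-carry inequalities persist, making $\vec{u}^{(J)}$ admissible. Since $\vec{u}^{(J)} \leq \vec{u}$ componentwise and $A$ has nonnegative integer entries, $\floor{A\vec{u}^{(J)}} \leq \floor{A\vec{u}} \leq \vec{b}$. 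Proposition \ref{t:basepaddition} then gives $\vec{x}^\vec{b} \in I^{[\norm{\vec{u}^{(J)}}]}$, so $\norm{\vec{u}^{(J)}} \leq \lambda_\vec{b}(I)$, and letting $J \to \infty$ yields $\norm{\vec{u}} \leq \lambda_\vec{b}(I)$.

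The main delicate step is the truncation argument: Remark \ref{r:admissible} warns that admissibility depends on which base $p$ representation one chooses, so I would need to truncate the witnessing representation itself rather than the default base $p$ expansion of each $u_i$. A smaller subtlety is the passage from real to $p$-adic rational exponents in the first direction, but this is handled cleanly by the Noetherian stabilization built into the definition of real Frobenius powers.
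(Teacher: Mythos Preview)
Your proposal is correct and follows essentially the same approach as the paper: both directions hinge on Proposition~\ref{t:basepaddition} together with a truncation argument to pass between real admissible vectors and $p$-adic rational ones. The paper's proof is terser---it dispatches the inequality $\lambda_{\vec b}\le \ell$ with the one-line remark that $\lambda_{\vec b}$ is the supremum over the smaller set of $p$-adic admissible vectors, and handles $\ell\le\lambda_{\vec b}$ with the same truncation you describe---but the underlying argument is the same as yours.
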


\begin{proof}
Let $\ell$ be the supremum on the right-hand side.  Then for any $\epsilon$, there exists $\vec{u}$ with $\|\vec{u}\|>\ell - \frac{\epsilon}{2}$.  Choose $e$ such that $\frac{1}{p^{e}}<\frac{\epsilon}{2}$, and let $\vec{u}_{e}$ be the vector obtained by writing $\vec{u}$ in base $p$ so that it adds with no carries, and then truncating every entry of $\vec{u}$ after $e$ decimal places.  Since $\vec{u}$ adds without carries, we have $\|\vec{u}\|-\|\vec{u}_{e}\|<\frac{1}{p^{e}}$.  Thus $\ell<\|\vec{u}_{e}\|+\epsilon<\lambda_{\vec{b}}+\epsilon$.  So $\ell \leq \lambda_{\vec{b}}$. Meanwhile, $\lambda_{\vec{b}}\leq \ell$ is immediate, since it is the supremum of a smaller set.
\end{proof}

\begin{example} \label{e:openfractalonly}
It is tempting to try to replace the supremum in Proposition \ref{p:lambdabmax} with a maximum by invoking compactness.  However, it is incorrect to claim that $\lambda_\vec{b}$ is equal to the maximum value of $\norm{\vec{u}}$, over all admissible $\vec{u}$ satisfying $\floor{A\vec{u}} \leq \vec{b}$.  Consider $I = (x^2y^2, y^3z^3) \subseteq S = \kk[x, y, z]$ as in Example \ref{e:babypolytopes}, and set $\vec{b} = (0,0,0)$.  We have $\lambda_\vec{b} = \frac{1}{2}$, realized by a sequence of vectors $\vec{u}$ approaching $(\frac{1}{2},0)$ from the left.  (The terms of this sequence depend on the characteristic.  If $p=2$, they are truncations of the binary vector $(.0\bar{1}, 0)_2$.)  But this sequence does not contain its limit point $\vec{v}=(\frac{1}{2},0)$, and in fact $\lfloor A\vec{v}\rfloor=(0,1,0)$ is not less than $\vec{b}$.
\end{example}

We now turn briefly to understanding the set of admissible vectors.

\begin{definition}\label{d:fractal}
Fix a prime $p$ and a dimension $d$.  The \emph{(open) $(p,d)$-Sierpinski simplex} is the set $\SS_{p,d}$ consisting of
  all $d$-tuples $(x_{1},\dots, x_{d})$ such that $0\leq x_{i}\leq 1$, each
  $x_{i}$ is a terminating decimal in base $p$, and these decimals add
  without a carry.

  The \emph{(closed) $(p,d)$-Sierpinski simplex} is the set
  $\overline{\SS}_{p,d}$ consisting of all real admissible $d$-tuples $(x_{1},\dots, 
  x_{d})$ such that $0\leq x_{i}\leq 1$.
\end{definition}

\begin{remark}
The distinction between the open and closed Sierpinski simplices is
not simply the distinction between $p$-adic fractions and real
numbers.  The non-uniqueness of decimal representations for
terminating decimals allows the closed Sierpinski simplex to contain
many rational points that are missing from the open simplex.  For
example, $\SS_{2,2}$ does not contain
$(\frac{1}{2},\frac{1}{2})=(.1,.1)$ because the sum $.1+.1=1$ involves
a carry.  However, $\overline{\SS}_{2,2}$ does contain this point,
because we may choose to write it as $(.1, .0\overline{1})$, and the
sum $.1+.0\overline{1}=.\overline{1}\dots$ does not require a carry.
\end{remark}

\begin{remark}
The sets $\SS_{p,d}$ and $\overline{\SS}_{p,d}$ are fractals.
$\overline{\SS}_{2,2}$ is the familiar Sierpinski gasket, and
$\overline{\SS}_{p,d}$ has dimension $\log_{p}\binom{p+d-1}{d}$.  We
provide several iterative methods for building these fractals.

\begin{description}
\item[Method one:]  Set $X=\{(x_{1},\dots, x_{d})\in \mathbb{Z}_{\geq
  0}^{d}:\sum x_{i}<p\}$.  Put $S_{1}=\{\frac{1}{p}x:x\in X\}$,
  $S_{2}=\{v+\frac{1}{p^{2}}w:v\in S_{1}, w\in X\}$, and in general
  $S_{i}=\{v+\frac{1}{p^{i}}w: v\in S_{i-1},w\in X\}$.  Then
  $\SS_{p,d}=\bigcup S_{i}$.

  Equivalently, $\SS_{p,d}=X+\frac{1}{p}\SS_{p,d}$.

\item[Method two:] Let $T_{0}$ be the unit hypercube $\{(x_{1},\dots,
  x_{d}):0\leq x_{i}\leq 1\}$.  Subdivide $T_{0}$ into $p^{d}$
  congruent hypercubes of side length $\frac{1}{p}$ in the standard way.
  Then delete all the smaller cubes that lie entirely in the
  half-space $\sum x_{i}\geq 1$.  Call the result $T_{1}$.  Then
  replace each of the cubes in $T_{1}$ with a $\frac{1}{p}$-scale
  copy of $T_{1}$; the result is $T_{2}$.  In general, obtain $T_{i}$
  by replacing each of the surviving cubes in $T_{1}$ with a
  $\frac{1}{p^{i-1}}$-scale copy of $T_{i-1}$ (or, equivalently,
  replace each of the surviving cubes in $T_{i-1}$ with a copy of
  $T_{1}$).  Then $\overline{\SS}_{p,d}=\cap T_{i}$.

\item[Method three:]  Let $W_{0}$ be the unit $d$-simplex (the convex
  hull of the origin and the $d$ standard basis vectors).  Divide each
  of the edges into $p$ equal segments, and draw in all hyperplanes
  parallel to the sides of $W_{0}$ and through the new vertices.  This
  divides $W_{0}$ into $\binom{p+d-1}{d} + \binom{p+d-2}{d}$ congruent
  sub-simplices, of which $\binom{p+d-1}{d}$ are oriented correctly.
  Delete the backwards simplices and call the result $W_{1}$.  In
  general, obtain $W_{i+1}$ by replacing each simplex in $W_{i+1}$
  with a scaled-down copy of $W_{1}$.  
\end{description}
\end{remark}

The closed Sierpinski simplex consists of all admissible vectors, but the open simplex is necessary to compute the jumping numbers.

Proposition \ref{p:lambdabmax} becomes the following:

\begin{thm}\label{t:lambdabwithfractal}
Let $I$ be a monomial ideal and $\vec{b} \in \NN^m$.  Then 
\[ \lambda_\vec{b}(I) = \sup\{ \norm{\vec{u}} : \vec{u} \in \SS_{p,n}, \floor{A\vec{u}} \leq \vec{b}\}. \]
\end{thm}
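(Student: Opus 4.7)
The plan is to reduce the theorem to Proposition~\ref{p:lambdabmax} and then argue that the admissible vectors appearing there can be approximated arbitrarily well by elements of the open Sierpinski simplex $\SS_{p,n}$. Writing $L$ for the supremum on the right-hand side of the theorem, I would prove $L \leq \lambda_\vec{b}(I)$ and $\lambda_\vec{b}(I) \leq L$ separately.

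For $L \leq \lambda_\vec{b}(I)$, the point is simply that every $\vec{u} \in \SS_{p,n}$ is admissible by definition, since its terminating base-$p$ expansion has digits that add with no carries. Each such $\vec{u}$ therefore satisfies the hypotheses in Proposition~\ref{p:lambdabmax}, and so $L$ is a supremum over a subset of the admissible set, giving the inequality immediately.

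The reverse inequality is the heart of the argument, and it will use a truncation step essentially identical to the one already appearing in the proof of Proposition~\ref{p:lambdabmax}. Fix $\epsilon > 0$. By Skoda's theorem (Remark~\ref{rmk:skoda}) I may assume $\lambda_\vec{b}(I) \leq 1$, and then every admissible $\vec{u}$ with $\floor{A\vec{u}} \leq \vec{b}$ satisfies $0 \leq u_i \leq \norm{\vec{u}} \leq 1$. Choose such a $\vec{u}$ with $\norm{\vec{u}} > \lambda_\vec{b}(I) - \epsilon/2$, fix a base-$p$ expansion of $\vec{u}$ that adds without carries, and let $\vec{u}_e$ be its coordinatewise truncation $e$ digits past the $p$-adic point. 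Then $\vec{u}_e$ is a vector of terminating base-$p$ decimals in $[0,1]^n$ whose digits still add without carries (truncation cannot introduce a carry), so $\vec{u}_e \in \SS_{p,n}$. Moreover $\vec{u}_e \leq \vec{u}$ componentwise and $A$ has non-negative entries, so $\floor{A\vec{u}_e} \leq \floor{A\vec{u}} \leq \vec{b}$, while admissibility forces $\norm{\vec{u}} - \norm{\vec{u}_e} \leq 1/p^e < \epsilon/2$ for $e$ large. Thus $\norm{\vec{u}_e} > \lambda_\vec{b}(I) - \epsilon$, and letting $\epsilon \to 0$ gives $\lambda_\vec{b}(I) \leq L$.

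The main obstacle to watch for is the bound $0 \leq u_i \leq 1$ that distinguishes $\SS_{p,n}$ from the broader set of admissible vectors used in Proposition~\ref{p:lambdabmax}; without first reducing to $\lambda_\vec{b}(I) \leq 1$, an admissible $\vec{u}$ could have some coordinate $u_i > 1$ whose truncation would still lie outside of $\SS_{p,n}$. The appeal to Remark~\ref{rmk:skoda} sidesteps this subtlety, after which everything else is a routine unpacking of definitions.
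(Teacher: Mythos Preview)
Your argument is more careful than the paper's, which simply asserts that the theorem is a restatement of Proposition~\ref{p:lambdabmax} and gives no separate proof. Your truncation step is exactly right and mirrors the argument already used inside the proof of Proposition~\ref{p:lambdabmax}: any admissible $\vec{u}$ can be truncated to a terminating vector $\vec{u}_e$ that still adds without carries, still satisfies $\floor{A\vec{u}_e}\le\vec b$, and loses at most $1/p^{e}$ in norm.

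The one genuine issue is your appeal to Skoda. Remark~\ref{rmk:skoda} says that if $\lambda$ is a critical exponent then so is $\lambda-\lceil\lambda\rceil+1\in(0,1]$; it does \emph{not} give a reduction of the formula in the theorem to the case $\lambda_{\vec b}\le 1$. In fact no such reduction is possible, because the identity is simply false when $\lambda_{\vec b}>1$. For instance, take $I=(x^{2})\subseteq\kk[x]$ and $\vec b=(2)$: then $\lambda_{\vec b}=3/2$, while the right-hand side is the supremum of $\SS_{p,1}\subseteq[0,1]$, which is $1$. So ``I may assume $\lambda_{\vec b}\le 1$'' is not a valid without-loss-of-generality step here.

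What is actually happening is that the paper is working under the standing convention, made explicit in Remark~\ref{rmk:skoda} and again in the hypothesis $\vec{x}^{\vec b}\notin I$ of Algorithms~\ref{algo:first} and~\ref{algo:second}, that only critical exponents in $(0,1]$ are in play. Under that convention your observation that $0\le u_i\le\norm{\vec u}\le 1$ is immediate, and the rest of your proof goes through unchanged. So the fix is simply to replace the Skoda sentence with a remark that the theorem is to be read under the paper's standing assumption $\lambda_{\vec b}\le 1$ (equivalently $\vec{x}^{\vec b}\notin I$); you have in effect uncovered a missing hypothesis in the statement rather than a step that Skoda can supply.
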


\subsection{Truncations and Witnesses}

\paragraph{}  In general, the real number $\lambda_\vec{b}$ is a critical exponent for $I$ if and only if, for every $p$-adic rational number $\ell=\frac{k}{q} < \lambda_\vec{b}$, there exists a vector $\vec{u}_\ell \in \frac{1}{q}\NN^n\cap \SS_{p, n}$ such that $\norm{\vec{u}} = \ell$ and $\floor{A\vec{u}_{\ell}} \leq \vec{b}$.  These $\vec{u}_\ell$ will have a limit point $\vec{u} \in \bar{\SS}_{p, n}$.
We are interested in finding this limit point $\vec{u}$ to avoid dealing with infinite sequences, but there are two pitfalls.

\begin{itemize}
\item If $\floor{A\vec{u}_\ell} \leq \vec{b}$, then the entries of $A\vec{u}$ are limits of the corresponding entries of $A\vec{u}_\ell$, which may converge to the next larger integer.  In particular, we may not have $\floor{A\vec{u}} \leq \vec{b}$.  The best we can do is to drop the floors, resulting in $A\vec{u} \leq \vec{b} + \ones$.

\item There may exist $\vec{v} \in \bar{\SS}_{p, n}$ satisfying $A\vec{v} \leq \vec{b} + \ones$ which are not limit points of a suitable sequence of $\vec{v}_{e}$.  For example, if $I = (x^2y^2,y^3z^{3}) \subseteq S = \kk[x, y, z]$, $\vec{b} = (0, 1, 0)$, and $p = 3$, then $\vec{v} = (\frac{1}{2}, \frac{1}{3})=(.\bar{1}, .1)_{3}\in \SS_{3,2}$ satisfies $A\vec{v} \leq (1,2,1)$ but is not a limit of suitable $\vec{v}_\ell$.  (In fact, in this case, $\lambda_\vec{b} = \frac{2}{3} \neq \norm{\vec{v}}$.)  We want to ignore such $\vec{v}$. 
\end{itemize}

We introduce new notation in hopes of addressing these issues.

\begin{definition}\label{def:stuartscott}
A vector $\vec{v} \in \SS_{p, n}$ is a \emph{witness} for the critical exponent $\lambda = \lambda_\vec{b}(I)$ if $A\vec{v} \prec \vec{b} + \ones$ and $\norm{\vec{v}} = \lambda$.  More generally, a vector $\vec{v} \in \SS_{p, n}$ is an \emph{$e$-witness} for $\lambda$ if $\vec{v} \in \frac{1}{p^e}\NN^n$, $A\vec{v} \prec \vec{b} + \ones$, and $\lambda - \frac{1}{p^e} \leq \norm{\vec{v}} < \lambda$.  We say that a sequence of vectors $\{\vec{v}_e\}$ is a \emph{convergent family of witnesses} if for each $e$, $\vec{v}_e$ is an $e$-witness, and every component of $\vec{v}_{e+1}$ agrees with the corresponding component of $\vec{v}_e$ to $e$ decimal places.
\end{definition}

We find it easier to think of $e$-witnesses in terms of truncation.

\begin{definition}\label{def:truncation_witness}
For any positive real number $z$ with nonterminating base $p$ decimal expansion
$z=z_0.z_1z_2\dots$, we define $\trunc_{e}(z)$ to be the truncation of this decimal after $e$ places, that is, $\trunc_{e}(z) = z_0.z_1z_2 \dots z_e.$
\end{definition}

\begin{remark}\label{rem:truncation_witness}
With this definition, $\vec{v}$ is an $e$-witness for $\lambda_{\vec{b}}$ if $\vec{v} \in \SS_{p,n}$, $A\vec{v} \prec \vec{b}+\ones$, and $\norm{\vec{v}} = \trunc_{e}(\lambda)$.
\end{remark}

\begin{remark}\label{rem:truncation_strictly_less}
For all $z$, the truncation of $z$ is strictly less than $z$ because we choose the nonterminating representation of $z$. For example, if $p=2$, and $z=\frac{1}{2}$, we write $z=0.0\overline{1}$, so $\trunc_2\left(\frac{1}{2}\right)=0.01=\frac{1}{4}$.
\end{remark}

\begin{lemma}\label{lem:ewitnessesexist}
For all $e$, $e$-witnesses exist.
\end{lemma}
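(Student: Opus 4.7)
The plan is to reduce the existence of an $e$-witness to a single application of Proposition~\ref{t:basepaddition}, using the fact that the truncation $\trunc_e(\lambda)$ is strictly less than $\lambda$. Write $\lambda = \lambda_\vec{b}(I)$ and set $\mu = \trunc_e(\lambda)$. By Remark~\ref{rem:truncation_strictly_less}, $\mu < \lambda$, and $\mu = k/p^e$ for some non-negative integer $k$.

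Step one: show that $\vec{x}^\vec{b} \in I^{[\mu]}$. Because $\lambda$ is defined as the supremum of those $\lambda' \geq 0$ for which $\vec{x}^\vec{b} \in I^{[\lambda']}$, and $\mu$ is strictly below this supremum, there exists $\lambda' > \mu$ with $\vec{x}^\vec{b} \in I^{[\lambda']}$. Monotonicity of Frobenius powers yields $I^{[\mu]} \supseteq I^{[\lambda']}$, so $\vec{x}^\vec{b} \in I^{[\mu]}$.

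Step two: extract the witness. Since $\mu$ is a $p$-adic rational with denominator $p^e$, Proposition~\ref{t:basepaddition} applied to $\vec{x}^\vec{b} \in I^{[\mu]}$ produces a vector $\vec{u} \in \tfrac{1}{p^e}\NN^n$ with $\norm{\vec{u}} = \mu$, such that $\sum u_i$ adds without carries (so $\vec{u} \in \SS_{p,n}$), and such that $\floor{A\vec{u}} \leq \vec{b}$. Since the entries of $\vec{b}$ are integers, the condition $\floor{A\vec{u}} \leq \vec{b}$ is equivalent to $A\vec{u} \prec \vec{b} + \ones$. Combined with $\norm{\vec{u}} = \trunc_e(\lambda)$, Remark~\ref{rem:truncation_witness} identifies $\vec{u}$ as an $e$-witness.

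The only real subtlety is step one, where one must know that the inequality $\trunc_e(\lambda) < \lambda$ is strict; this is precisely the role played by our convention in Remark~\ref{rem:truncation_strictly_less} of using the non-terminating base-$p$ expansion of $\lambda$. Once that inequality is in hand, the rest of the argument is just a translation from the ideal-theoretic side (membership in $I^{[\mu]}$) to the combinatorial side (admissible vectors with the right floor bound) via the black box of Proposition~\ref{t:basepaddition}.
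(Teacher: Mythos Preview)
Your argument is correct, but it takes a different route from the paper's. The paper works entirely on the combinatorial side: it invokes Theorem~\ref{t:lambdabwithfractal} to choose some $\vec{v}\in\SS_{p,n}$ with $A\vec{v}\prec\vec{b}+\ones$ and $\norm{\vec{v}}>\trunc_e(\lambda)$, and then truncates each entry of $\vec{v}$ after $e$ base-$p$ places; the no-carries condition guarantees that the truncated vector has norm exactly $\trunc_e(\lambda)$. You instead return to the ideal-theoretic definition of $\lambda_\vec{b}$, observe that $\vec{x}^\vec{b}\in I^{[\trunc_e(\lambda)]}$ by monotonicity, and then pull a witness out of Proposition~\ref{t:basepaddition}. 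Your route avoids the entrywise-truncation step; the paper's route stays inside the Sierpinski-simplex framework and does not need to revisit the Frobenius-power side.

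One small point worth tightening: Proposition~\ref{t:basepaddition} as stated only produces $\vec{u}\in\ZZ[\tfrac{1}{p}]^n$, not $\vec{u}\in\tfrac{1}{p^e}\NN^n$. You should either cite Proposition~\ref{t:keybackground} directly (which gives $\vec{u}\in\NN^n$ with $\norm{\vec{u}}=k$, hence $\tfrac{1}{p^e}\vec{u}\in\tfrac{1}{p^e}\NN^n$), or add the one-line observation that if the $u_i$ add to $k/p^e$ without carries then every digit of every $u_i$ beyond the $e$-th place must vanish. With that adjustment the proof is complete.
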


\begin{proof}
Choose a $\vec{v} \in \SS_{p,n}$ with $A\vec{v} \prec \vec{b}+\ones$ and $\lambda-\norm{\vec{v}} < \lambda -\trunc_{e}(\lambda)$. By Theorem~\ref{t:lambdabwithfractal}, this is possible because $\lambda$ is the supremum over all such $\vec{v}$. Observe that $\lambda$ agrees with $\norm{\vec{v}}$ to $e$ decimal places. Now let $\vec{v}_e$ be obtained from $\vec{v}$ by truncating each entry after $e$ decimal places. Because the entries of $\vec{v}$ add without carries, the rational numbers $\norm{\vec{v}}$ and $\norm{\vec{v}_e}$ agree to $e$ decimal places.
\end{proof}


\begin{proposition}\label{prop:conv_family}
Convergent families of witnesses exist. If $\{\vec{v}_e\}$ is a convergent family of witnesses for $\lambda_{\vec{b}}$, set $\vec{v} = \lim_{e \to \infty} \vec{v}_e$. Then $\norm{\vec{v}} = \lambda_{\vec{b}}$, and $\norm{\vec{v}_e} = \trunc_{e}(\lambda_{\vec{b}}) = \trunc_e(\norm{\vec{v}})$. 
\end{proposition}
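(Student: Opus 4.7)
The plan is to produce a convergent family by a tree-plus-K\"onig's-lemma construction, then read off the limit identities by a continuity argument.

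The crucial preliminary observation is that \emph{the componentwise base-$p$ truncation of an $(e+1)$-witness to $e$ digits is an $e$-witness}. Given an $(e+1)$-witness $\vec{w}$, write $\vec{w} = \vec{w}' + \frac{1}{p^{e+1}}\vec{d}$ with $\vec{w}' \in \frac{1}{p^e}\NN^n$ (the componentwise truncation) and $\vec{d} \in \{0,1,\dots,p-1\}^n$. Then $A\vec{w}' \leq A\vec{w} \prec \vec{b}+\ones$, and dropping the last digit of each coordinate cannot introduce carries, so $\vec{w}' \in \SS_{p,n}$. Because the addition $\sum_i w_i$ is carry-free, $\norm{\vec{d}}$ coincides with the digit in the $(e+1)$-st place of the terminating expansion of $\norm{\vec{w}} = \trunc_{e+1}(\lambda_{\vec{b}})$, which by Definition~\ref{def:truncation_witness} is the $(e+1)$-st digit of the non-terminating expansion of $\lambda_{\vec{b}}$. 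Therefore $\norm{\vec{w}'} = \trunc_{e+1}(\lambda_{\vec{b}}) - p^{-(e+1)}\norm{\vec{d}} = \trunc_e(\lambda_{\vec{b}})$, so $\vec{w}'$ is an $e$-witness.

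With this in hand, form the rooted tree whose depth-$e$ vertices are the $e$-witnesses, with $\vec{v}_e$ declared to be the parent of $\vec{v}_{e+1}$ precisely when $\vec{v}_e$ is the componentwise truncation of $\vec{v}_{e+1}$, attaching an artificial root joined to every $0$-witness. The tree is finitely branching, since each $e$-witness has at most $p^n$ children (one for each digit vector in $\{0,\dots,p-1\}^n$ one could append), and it is infinite because Lemma~\ref{lem:ewitnessesexist} makes every level nonempty. K\"onig's lemma supplies an infinite path $\vec{v}_0, \vec{v}_1, \dots$, which by construction is a convergent family of witnesses. Each coordinate sequence is Cauchy with $|(\vec{v}_{e+1})_i - (\vec{v}_e)_i| \leq p^{-e}$, so $\vec{v} := \lim_e \vec{v}_e$ exists, and continuity of $\norm{\cdot}$ combined with $\trunc_e(\lambda_{\vec{b}}) \to \lambda_{\vec{b}}$ gives $\norm{\vec{v}} = \lim_e \norm{\vec{v}_e} = \lim_e \trunc_e(\lambda_{\vec{b}}) = \lambda_{\vec{b}}$; after this, $\norm{\vec{v}_e} = \trunc_e(\lambda_{\vec{b}}) = \trunc_e(\norm{\vec{v}})$ is immediate.

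The main obstacle is the existence half: Lemma~\ref{lem:ewitnessesexist} produces $e$-witnesses one level at a time, with no a priori guarantee that independently chosen witnesses at successive levels will share compatible digit expansions, and it need not be true that every individual $e$-witness extends to an $(e+1)$-witness. The truncation observation plus K\"onig's lemma is what forces global coherence out of level-by-level existence; once a convergent family is in hand, the identification of the limit is a routine continuity computation that sidesteps the terminating-vs-non-terminating subtleties, because the identity $\norm{\vec{v}} = \lambda_{\vec{b}}$ transfers all statements about truncations of $\lambda_{\vec{b}}$ directly to $\norm{\vec{v}}$.
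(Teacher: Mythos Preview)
Your proof is correct, and it is more explicit than the paper's. Both arguments are at heart compactness arguments, but they are organized differently. The paper extracts a Cauchy subsequence from an arbitrary sequence of $e$-witnesses using boundedness of $\{\vec{v}:A\vec{v}\prec\vec{b}+\ones\}$, then invokes discreteness of the set of $e$-witnesses for each fixed $e$ to force the subsequence to stabilize digit by digit. You instead isolate and prove the key combinatorial fact that the paper uses only implicitly---that the componentwise $e$-digit truncation of an $(e+1)$-witness is an $e$-witness---and then apply K\"onig's lemma to the resulting finitely branching tree of witnesses. Your route has the advantage that the infinite path it produces is \emph{by construction} a convergent family in the sense of Definition~\ref{def:stuartscott}, whereas the paper's Cauchy-subsequence argument leaves a small gap between ``convergent subsequence'' and ``family in which $\vec{v}_{e+1}$ agrees with $\vec{v}_e$ to $e$ places for every $e$.'' One small presentational point: your limit computation $\norm{\vec{v}}=\lim_e\norm{\vec{v}_e}=\lim_e\trunc_e(\lambda_{\vec{b}})=\lambda_{\vec{b}}$ is stated for the family produced by K\"onig, but since it only uses that each $\vec{v}_e$ is an $e$-witness (hence $\norm{\vec{v}_e}=\trunc_e(\lambda_{\vec{b}})$ by Remark~\ref{rem:truncation_witness}) and that consecutive terms agree to increasing precision, it applies verbatim to an arbitrary convergent family, as the proposition requires.
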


\begin{proof}
For existence, observe that the set of nonnegative vectors $\vec{v}$ satisfying $A\vec{v} \prec \vec{b} + \ones$ is bounded, so Cauchy sequences exist. For fixed $e$, the set of $e$-witnesses is discrete, so any Cauchy sequence contains an $e$-witness for arbitrarily large $e$. Since all vectors $\vec{v}$ are in $\RR^n$, the Cauchy sequences converge.
\end{proof}

\begin{rmk}\label{rmk:family}
If $\vec{v}$ is a vector of decimals written in base $p$, not all terminating, define $\tau_{e}(\vec{v})$ to be the vector obtained by truncating each entry after $e$ decimal places. (Note that $\tau_{e}(\vec{v}) \neq \trunc_{e}(\vec{v})$ because some of the entries may be terminating decimals.) By abuse of notation, if $\norm{\vec{v}} = \lambda_\vec{b}$ without carries, then $\{\tau_{e}(\vec{v})\}$ is a family of witnesses. We also refer to $\vec{v}$ as a family of witnesses.
\end{rmk}

\begin{cor}
With the notation as above, we have
\[ \lambda_\vec{b}(I) = \sup\{ \norm{\vec{z}} \mid \vec{z} \in \SS_{p, n}, A\vec{z} \prec \vec{b} + \ones \} 
= \sup\{ \norm{\vec{z}} \mid \vec{z} \in \bar{\SS_{p, n}}, A\vec{z} \prec \vec{b} + \ones \} \]
\end{cor}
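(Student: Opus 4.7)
The plan is to prove the two equalities separately. The first is essentially a tautology given Theorem~\ref{t:lambdabwithfractal}, while the second requires an approximation argument pushing a vector in the closed Sierpinski simplex to vectors in the open one.

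For the first equality, I would note that since $\vec{b} \in \NN^m$ is an integer vector, for any real $\vec{y} \in \RR^m$ the condition $\floor{\vec{y}} \leq \vec{b}$ is equivalent componentwise to $\vec{y} \prec \vec{b} + \ones$: indeed, $\floor{y_j} \leq b_j$ if and only if $y_j < b_j + 1$ when $b_j \in \ZZ$. Applying this with $\vec{y} = A\vec{u}$, the constraint set appearing in Theorem~\ref{t:lambdabwithfractal} and the one in the middle supremum of the corollary coincide, so the two suprema agree and both equal $\lambda_\vec{b}(I)$.

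For the second equality, let $M_o$ and $M_c$ denote the middle and right-hand suprema, respectively. The inequality $M_o \leq M_c$ is immediate from $\SS_{p,n} \subseteq \bar{\SS}_{p,n}$. For the reverse inequality, I would fix $\vec{z} \in \bar{\SS}_{p,n}$ with $A\vec{z} \prec \vec{b} + \ones$ and approximate it by the truncations $\vec{z}_e = \tau_e(\vec{z})$ of Remark~\ref{rmk:family}, computed with respect to a carry-free base $p$ representation of $\vec{z}$. For every entry of $\vec{z}$ that happens to be a $p$-adic rational, I would choose the nonterminating form, so that each $\vec{z}_e$ is a vector of terminating base $p$ decimals whose sum still adds without carries; in particular $\vec{z}_e \in \SS_{p,n}$. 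Since the entries of $A$ are nonnegative integers and $\vec{z}_e \leq \vec{z}$ componentwise, we obtain $A\vec{z}_e \leq A\vec{z} \prec \vec{b} + \ones$, so each $\vec{z}_e$ is an admissible competitor in $M_o$, giving $\norm{\vec{z}_e} \leq M_o$. Passing to the limit $e \to \infty$ yields $\norm{\vec{z}} = \lim_e \norm{\vec{z}_e} \leq M_o$, and taking the supremum over $\vec{z}$ gives $M_c \leq M_o$.

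The main (and minor) obstacle is the book-keeping around the two base $p$ representations of a $p$-adic rational: as Remark~\ref{r:admissible} emphasizes, the terminating form may force a carry while the nonterminating one does not, so one has to commit to the carry-free representation up front in order for every truncation $\vec{z}_e$ to land in the open simplex. Everything else is a routine approximation argument that exploits only the strictness of $A\vec{z} \prec \vec{b} + \ones$ together with nonnegativity of $A$.
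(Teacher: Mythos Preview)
Your approach is sound, and the first equality is exactly the observation underlying Theorem~\ref{t:lambdabwithfractal}. For the second equality your truncation idea is also correct, but one step is wrong as stated: you claim that for every $p$-adic rational coordinate of $\vec z$ you may pass to its nonterminating expansion and still have a carry-free sum. That is false in general. For instance, with $p=2$ and $\vec z=(\tfrac12,\tfrac12)\in\bar\SS_{2,2}$, the carry-free representation is $(.1,\,.0\overline1)$ (or its mirror), whereas $(.0\overline1,\,.0\overline1)$ carries in every digit past the first. The remedy is simply to drop this extra requirement: take \emph{any} carry-free representation of $\vec z$ (one exists by the definition of $\bar\SS_{p,n}$), in whatever mixture of terminating and nonterminating entries it happens to use, and truncate that representation. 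Since the no-carry condition is checked column by column, each $\tau_e(\vec z)$ is automatically a vector of terminating, carry-free decimals and hence lies in $\SS_{p,n}$; and since truncation never increases an entry and $A$ has nonnegative entries, $A\tau_e(\vec z)\le A\vec z\prec\vec b+\ones$. With this correction your argument goes through unchanged.

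The paper's own proof is a two-sentence appeal to the $e$-witness machinery built just above (Lemma~\ref{lem:ewitnessesexist} and Proposition~\ref{prop:conv_family}), with the upper bound implicitly supplied by Proposition~\ref{p:lambdabmax}. Your version unpacks essentially the same truncation idea directly rather than citing those packaged lemmas, so the two routes are very close in spirit.
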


\begin{proof}
The set on the left contains every $e$-witness, and the set on the right contains the limit of every convergent sequence of $e$-witnesses.
\end{proof}

\begin{example}\label{e:babypolytopes}
  Let $I=(x^{2}y^{2}, y^{3}z^{3}) \subseteq S = \kk[x, y, z]$.  Figure \ref{f:babypolytopes} divides the rectangle $[0,q]\times [0,q]$ into several regions.  Suppose we are interested in computing the least critical exponent of $I$, i.e., determining when $1=\mathbf{x}^{(0,0,0)}\in I^{[\frac{k}{q}]}$.  The matrix $A$ is $\begin{bmatrix}2&0\\2&3\\0&3\end{bmatrix}$.  A vector $\mathbf{u}$ satisfies $A\mathbf{u}\leq p^{e}(\mathbf{b + 1}) - \mathbf{1}$ if it is to the left of the dashed line labeled $a^{1}$, below the dashed line labeled $c^{1}$, and below and to the left of the dashed line labeled $b^{1}$.

    The triangle labeled ``1'' in the lower left corner is thus the collection of all vectors $\mathbf{u}$ satisfying $A\mathbf{u}\leq p^{e}(\mathbf{b + 1}) - \mathbf{1}$.  Thus $1\in I^{[\frac{k}{q}]}$ if and only if there is some $\mathbf{u}$ in this triangle with $\|\mathbf{u}\|=k$ and $\binom{k}{\mathbf{u}}\not\equiv 0$ (mod $p$).  In fact, (independent of characteristic), the vector $\mathbf{u}=(k,0)$ accomplishes this whenever $k<\frac{q}{2}$.  We conclude that the least critical exponent is thus the supremum, over all $q$, of the fractions $\frac{k}{q}$ with $k<\frac{q}{2}$; that is, the least critical exponent of $I$ is equal to $\frac{1}{2}$.

    We will see later that the condition ``$\binom{k}{\mathbf{u}}\not\equiv 0$ (mod $p$)'' is considerably more interesting, even for this ideal.  The other jumping numbers will turn out to depend on the characteristic.
\end{example}

  \begin{figure}[hbtp]\label{f:babypolytopes}
    \begin{center}
\scalebox{0.5}{
\begin{tikzpicture}
\draw[gray, very thin] (0, 0) grid (12.9, 12.9);
\draw[->] (-1,0) -- (13,0);
\draw[->] (0, -1) -- (0, 13); 
\node[above] (u2) at (0, 13) {$u_2$};
\node[right] (u1) at (13, 0) {$u_1$};
\node[below] at (6, 0) {$\frac{q}{2}$};
\node[below] at (12, 0) {$q$};
\draw[thick, dashed] (6, 0) -- (6, 13);
\node[above] at (6, 13) {$a^1$}; 
\draw[thick, dashed] (12, 0) -- (12, 13);
\node[above] at (12, 13) {$a^2$}; 
\node[left] at (0, 4) {$\frac{q}{3}$};
\node[left] at (0, 8) {$\frac{2q}{3}$};
\node[left] at (0, 12) {$q$};
\draw[thick, dashed] (0, 4) -- (13, 4);
\node[right] at (13, 4) {$c^1$}; 
\draw[thick, dashed] (0, 8) -- (13, 8);
\node[right] at (13, 8) {$c^2$}; 
\draw[thick, dashed] (0, 12) -- (13, 12);
\node[right] at (13, 12) {$c^3$};
\draw[thick, dashed] (0, 4) -- (6, 0);
\node[above right] at (3, 2) {$b^1$}; 
\draw[thick, dashed] (0, 8) -- (12, 0);
\node[above right] at (6, 4) {$b^2$}; 
\draw[thick, dashed] (0, 12) -- (13, 10/3);
\node[above right] at (6, 8) {$b^3$}; 
\node at (1.5, 1.5) {\Large $1$};
\node at (4.5, 2.5) {\Large $b$};
\node at (8, 1.5) {\Large $ab$};
\node at (10.5, 2.5) {\Large $ab^2$};
\node at (1.5, 5.5) {\Large $bc$};
\node at (1.5, 9.5) {\Large $b^2c^2$};
\node at (8, 5.5) {\Large $ab^2c$};
\node at (4.5, 6.5) {\Large $b^2c$};
\node at (10.5, 6.5) {\Large $ab^3c$};
\node at (13, 1.5) {\Large $a^2b^2$};
\end{tikzpicture}
}
\end{center}
\caption{Subdivision of a $q\times q$ rectangle into polytopes for $I=(a^{2}b^{2},b^{3}c^{3})$}
  \end{figure}
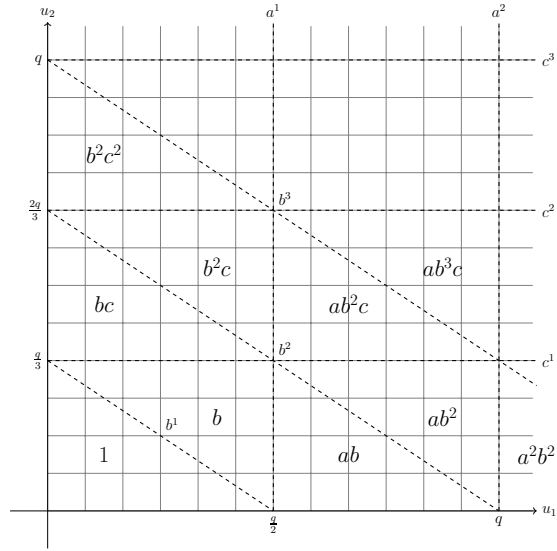

\begin{observation} \label{obs:recovery}
We recover the content of Remark~\ref{rmk:skoda} using our methods, specifically that if $\lambda > 1$ is a critical exponent, then $\lambda-1$ is as well. Choose a monomial $x^{\vec{b}}$ with $\lambda=\lambda_{\vec{b}}$. Because $\lambda>1$, $x^{\vec{b}} \in I$, and there exists $x^{\vec{a}}$ among the minimal generators of $I$ dividing $x^{\vec{b}}$. Then $\lambda-1 = \lambda_{\vec{b}-\vec{a}}$.
\end{observation}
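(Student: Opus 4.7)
The plan is to invoke the preceding proposition to write $\lambda = \lambda_{\vec{b}}$ for some $\vec{b} \in \NN^m$, and then to identify $\lambda - 1$ as the critical exponent $\lambda_{\vec{b} - \vec{a}}$ for a minimal generator $\vec{x}^{\vec{a}}$ of $I$ dividing $\vec{x}^{\vec{b}}$. First, because $\lambda > 1$, monotonicity of Frobenius powers applied to any $\nu \in (1, \lambda)$ yields $\vec{x}^{\vec{b}} \in I^{[\nu]} \subseteq I^{[1]} = I$, so, since $I$ is a monomial ideal, some minimal generator $\vec{x}^{\vec{a}_i}$ divides $\vec{x}^{\vec{b}}$; the index $i$ will be chosen carefully below.

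The equality $\lambda_{\vec{b}} - 1 = \lambda_{\vec{b} - \vec{a}}$ will follow from a correspondence between witnesses via the translation $\vec{u} \mapsto \vec{u} \pm \vec{e}_i$. Choose a convergent family of witnesses $\{\vec{v}_e\}$ for $\lambda_{\vec{b}}$. Because $\|\vec{v}_e\| \to \lambda > 1$ and admissibility identifies $\|\vec{v}_e\|$ with its no-carry base-$p$ digit sum, for large $e$ some coordinate $v_{e, j}$ must have an integer-part digit $\geq 1$ at some place $\ell \geq 0$. Let $\ell^*$ be the smallest such place; by pigeonhole over the finitely many generators of $I$, I fix an index $i$ (after passing to a subsequence) with the $\ell^*$-digit of $v_{e, i}$ being at least $1$, so $v_{e, i} \geq p^{\ell^*} \geq 1$. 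The strict inequality $A\vec{v}_e \prec \vec{b} + \ones$ together with $v_{e, i} \geq 1$ and the nonnegativity of $A$ yields $\vec{a}_i \leq \vec{b}$, so $\vec{x}^{\vec{a}_i}$ divides $\vec{x}^{\vec{b}}$; set $\vec{a} = \vec{a}_i$. Then $\vec{w}_e = \vec{v}_e - \vec{e}_i$ satisfies $A\vec{w}_e \prec (\vec{b} - \vec{a}) + \ones$ and $\|\vec{w}_e\| = \|\vec{v}_e\| - 1 \to \lambda - 1$. The borrow from subtracting $1$ at place $0$ of $v_{e, i}$ cascades only through places $0, \ldots, \ell^* - 1$ (where $v_{e, i}$ had digit $0$), producing digit $p - 1$ at each and decreasing the digit at place $\ell^*$ by $1$. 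Minimality of $\ell^*$ forces every other coordinate of $\vec{v}_e$ to also have digit $0$ at each place $\ell < \ell^*$, so the new digit sums $p - 1$ remain strictly below $p$ and admissibility is preserved. Thus $\{\vec{w}_e\}$ witnesses $\lambda_{\vec{b} - \vec{a}} \geq \lambda - 1$.

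For the upper bound $\lambda_{\vec{b} - \vec{a}} \leq \lambda - 1$, the symmetric translation sends an admissible $\vec{w}$ with $A\vec{w} \prec (\vec{b} - \vec{a}) + \ones$ to $\vec{w} + \vec{e}_i$, which automatically satisfies $A(\vec{w} + \vec{e}_i) \prec \vec{b} + \ones$ and has norm $\|\vec{w}\| + 1$; admissibility of $\vec{w} + \vec{e}_i$ would force $\|\vec{w}\| + 1 \leq \lambda_{\vec{b}} = \lambda$. The main obstacle is exactly this admissibility: the increment can push the digit sum at place $0$ from $p - 1$ to $p$, violating the no-carry condition in the naive terminating representation. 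I would resolve this by passing to the closed Sierpinski simplex $\bar{\SS}_{p, n}$ and re-representing the other coordinates of $\vec{w}$ via nonterminating base-$p$ tails, as in the remark on non-uniqueness of base-$p$ representations, exploiting the structural slack provided by the choice of $i$ (tied to the smallest nonzero integer digit of $\lambda$) to absorb the increment consistently. With both bounds established, $\lambda - 1 = \lambda_{\vec{b} - \vec{a}}$ is a critical exponent of $I$.
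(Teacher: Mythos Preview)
Your lower bound ($\lambda_{\vec{b}-\vec{a}}\ge\lambda-1$) is essentially sound: once you locate an index $i$ with $v_{e,i}\ge 1$, the borrow analysis correctly shows $\vec{v}_e-\vec{e}_i$ is admissible, and the rest follows.  (One caveat: the paper's Sierpinski simplex $\SS_{p,n}$ confines entries to $[0,1]$, so for $\lambda>1$ you should work with general admissible vectors as in Proposition~\ref{p:lambdabmax} rather than with $e$-witnesses as defined.)

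The upper bound, however, has a genuine gap that cannot be repaired by re-representation, because the equality $\lambda-1=\lambda_{\vec{b}-\vec{a}}$ asserted in the observation is \emph{false} in general.  Take $p=2$, $I=(x^2,y^2)$, and $\vec{b}=(3,3)$.  Then $\lambda_{\vec{b}}=2$ (witnessed by $\vec{u}=(2^e,2^e-1)$ at level $q=2^e$, giving the generator $x^2y\mid x^3y^3$), while for either minimal generator $\vec{a}\in\{(2,0),(0,2)\}$ one computes $\lambda_{\vec{b}-\vec{a}}=\lambda_{(1,3)}=\lambda_{(3,1)}=2$, not $1$ (witnessed by $\vec{u}=(0,2^{e+1}-1)$, giving the generator $y^3\mid xy^3$).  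The obstruction is exactly the one you flag: the admissible $\vec{w}=(0,\,2-2^{-e})$ has $A\vec{w}\prec(2,4)$ and $\|\vec{w}\|\to 2$, but $\vec{w}+\vec{e}_1=(1,\,2-2^{-e})$ is not admissible in base~$2$ under \emph{any} representation---both entries lie in $[1,2)$, forcing a $1$ in the units place, and rewriting $1$ as $0.\overline{1}$ only pushes the collision to every negative place.  So your proposed fix via nonterminating tails cannot work, and no ``careful choice of $i$'' tied to the witnesses for $\lambda_{\vec{b}}$ rescues the argument.

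The paper's one-line justification is therefore too terse and, read literally, incorrect.  The weaker conclusion that $\lambda-1$ is \emph{some} critical exponent remains true (in the example it is $\lambda_{(1,1)}=1$), but realizing it as $\lambda_{\vec{b}-\vec{a}}$ for a single minimal generator $\vec{a}$ fails; one must either subtract a suitable \emph{combination} of generator exponents or first replace $\vec{b}$ by a more carefully chosen monomial among those with $\lambda_{\vec{b}}=\lambda$.
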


\section{An Algorithm for Computing Critical Exponents}\label{section:algo}

\paragraph{} Our strategy for computing the critical exponent $\lambda_{\vec{b}}(I)$ associated to a vector $\vec{b} \in \NN^m$ is to recursively compute its base $p$ decimal expansion to $e$ decimal places of accuracy. 
We begin by defining an (infinite) process that computes all $e$-witnesses to $\lambda_{\vec{b}}$ for all $e$. Later we show in Algorithm~\ref{algo:second} that this can be adapted to a terminating algorithm. In the Appendix, we demonstrate Algorithm~\ref{algo:second}, computing some critical exponents.

\begin{algo}\label{algo:first}
Fix a monomial $\vec{x}^\vec{b} \notin I$ and an integer $e \geq 0$. Starting with $\mathcal{L}_0 = \{\vec{0}\}$, inductively compute the set $\mathcal{L}_e$ of all $e$-witnesses to $\lambda_\vec{b}$ as follows:
\begin{enumerate}
\item For each $e \geq 1$ and $\vec{u} \in \mathcal{L}_{e-1}$, compute the remainder vector $\vec{r} = \vec{b} + \ones - A\vec{u}$.
\item Find all solutions $\vec{v} \in \NN^n$ maximizing $\norm{\vec{v}}$ subject to the constraints that $A \vec{v} \prec p^{e}\vec{r}$ and $\norm{\vec{v}} < p$.
\item Append each $\vec{u}+\frac{1}{p^e}\vec{v}$ to $\mathcal{L}_{e}$.
\item After doing this for all $\vec{u}$, compute $\lambda_{e} = \max\{\norm{\vec{w}} : \vec{w} \in \mathcal{L}_{e}\}$, and remove from $\mathcal{L}_{e}$ every $\vec{w}$ with $\norm{\vec{w}} < \lambda_{e}$.
\end{enumerate}
\end{algo}

\begin{rmk}
We make some simple observations about the preceding algorithm:
\begin{itemize}
\item The computation of all $\vec{v}$ in Step 2 terminates because there are finitely many integer vectors $\vec{v}$ with $\norm{\vec{v}} < p$.
\item Since the base $p$ expansion of every $\vec{u} \in \mathcal{L}_{e-1}$ terminates after $e - 1$ decimal places by induction, Step 3 effectively appends the entries of each vector $\vec{v}$ to the $e$-th decimal places of $\vec{u}$.
\item Every element $\vec{u} + \frac{1}{p^e}\vec{v}$ of $\mathcal{L}_{e}$ is contained in $\SS_{p,n}$ since $\vec{u} \in \SS_{p, n}$ by induction and $\norm{\vec{v}} < p$ so that computing the norm of $\vec{u} + \frac{1}{p^e}\vec{v}$ doesn't involve a carry.
\end{itemize}
\end{rmk}

If we had infinite time and space, we could use Algorithm~\ref{algo:first} to compute $\lambda_\vec{b}$. It is the limit of $\lambda_e$ as $e$ goes to infinity.  To compute $\lambda_\vec{b}$ with finite resources, we need to modify the algorithm to detect when it starts looping. For inspiration, we turn to a familiar algorithm with this capability, long division.

\begin{example}\label{longdiv}
We use long division to find the decimal expansion of $\frac{1}{22}$. Long division first divides 22 into 1, computing a quotient of 0 and a remainder of 1. It then multiplies the remainder by 10 and divides by 22 again, producing a quotient of 0 and a remainder of 10. At the next step, we get a quotient of 4 and a remainder of 12, followed by a quotient of 5 and a remainder of 10. At this point, the algorithm recognizes that the new remainder has appeared before, so all steps from the previous remainder of 10 will repeat. Thus all future quotients will repeat in a pattern of 4, 5, 4, 5, \dots. We conclude that $\frac{1}{22} =0.0\overline{45}$. 
\end{example}

In order to modify Algorithm~\ref{algo:first}, we need to keep track of remainders at every step. Unfortunately, there are some complications because there may be multiple $\vec{v}$ for a given $\vec{u}$, so we are not forced to repeat in the same pattern. Hence we need a lemma to show that these complications do not matter in computing $\lambda_{\vec{b}}$.

\begin{lemma}\label{l:replacelarge}
With the notation of Algorithm~\ref{algo:first}, since $A$ is fixed, and every $\vec{v}$ satisfies $\norm{\vec{v}} < p$, there exists an integer $\Omega$ such that every entry of $A\vec{v}$ is at most $\Omega$. Consequently, if any entry of $\vec{r}$ is greater than $\Omega$, we may replace that entry with $\Omega$ without changing the result of the algorithm. In particular, after this modification, there are only finitely many possible remainder vectors.
\end{lemma}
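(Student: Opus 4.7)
The plan is to verify the three assertions of the lemma in turn. The first two --- existence of $\Omega$ and invariance of the algorithm under the capping modification --- are direct verifications on the constraint $A\vec{v}\prec p^e\vec{r}$ in Step 2 of Algorithm~\ref{algo:first}. The finiteness assertion is the main obstacle, since $\vec{r}$ is in general a rational vector whose denominator grows with $e$, so capping the entries at $\Omega$ does not by itself yield finitely many distinct vectors.

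For the existence of $\Omega$, I would set $\Omega = M(p-1)$, where $M$ denotes the largest entry of $A$. For any $\vec{v}\in\NN^n$ with $\norm{\vec{v}}<p$, each coordinate satisfies $(A\vec{v})_i = \sum_{j=1}^{n} A_{ij}v_j \leq M\norm{\vec{v}} \leq M(p-1) = \Omega$.

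For the invariance claim, I would examine the constraint $A\vec{v}\prec p^e\vec{r}$ coordinatewise. Fix $e\geq 1$ and suppose $r_i>\Omega$. For every $\vec{v}\in\NN^n$ with $\norm{\vec{v}}<p$, the bound $(A\vec{v})_i \leq \Omega < p^e\Omega < p^e r_i$ shows that the original $i$-th constraint is automatically satisfied, and the same bound $(A\vec{v})_i \leq \Omega < p^e\Omega$ shows that the replaced constraint $(A\vec{v})_i < p^e\Omega$ is also automatically satisfied. Hence the feasible set for $\vec{v}$ is unchanged, and so are the set of maximizers selected in Step 2, the list $\mathcal{L}_e$ formed in Step 3, and the filtered value $\lambda_e$ in Step 4.

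For the finiteness claim, the subtlety is that $\vec{r}$ may be rational with unbounded denominator, so capping alone is not enough. I would resolve this by passing to the integer-valued scaled remainder $\vec{R} := p^e\vec{r}$, which is integer because $\vec{u}\in\tfrac{1}{p^{e-1}}\NN^n$ forces $p^e A\vec{u}$ to be integer. The algorithm depends on $\vec{r}$ only through the constraint $A\vec{v}\prec\vec{R}$, and by the first part any coordinate of $\vec{R}$ exceeding $\Omega$ is non-binding; thus the effective remainder state is encoded by $\min(\vec{R},\Omega\ones)\in\{0,1,\ldots,\Omega\}^m$, a finite set. This finite state space is precisely what enables cycle detection in Algorithm~\ref{algo:second}.
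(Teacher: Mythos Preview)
The paper states this lemma without proof, so there is no argument to compare against directly. Your proof is essentially correct and in fact more careful than the paper's somewhat informal statement; you rightly notice that the literal remainder $\vec r=\vec b+\ones-A\vec u$ of Algorithm~\ref{algo:first} is rational with denominator $p^{e-1}$, so that merely bounding its entries by $\Omega$ does not by itself give a finite set. Passing to the integer vector $\vec R=p^{e}\vec r$ (which is exactly the $\vec r_{e-1}$ of Remark~\ref{r:remainder}, up to the paper's inconsistent scaling) is the right fix, and this is indeed the object on which Algorithm~\ref{algo:second} detects cycles.

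There is one small off-by-one slip in your last paragraph. You assert that the effective state is $\min(\vec R,\Omega\ones)$, but this conflates the cases $R_j=\Omega$ and $R_j=\Omega+1$: the constraint $(A\vec v)_j<\Omega$ can exclude a vector with $(A\vec v)_j=\Omega$, while $(A\vec v)_j<\Omega+1$ cannot. The correct encoding is $\min(\vec R,(\Omega+1)\ones)\in\{1,\dots,\Omega+1\}^m$, which is still finite. A second point worth adding, since the lemma is invoked to justify termination of Algorithm~\ref{algo:second}, is that this capping is compatible with the recursion $\vec r_i=p\vec r_{i-1}-A\vec d_i$: if $(\vec r_{i-1})_j>\Omega$ and we replace it by $\Omega$, then the updated $j$-th entry is $p\Omega-(A\vec d_i)_j\ge (p-1)\Omega\ge\Omega$, which caps back to $\Omega$, matching the result of updating first and then capping. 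This is what guarantees that equal capped remainders really do force identical future behavior.
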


\begin{definition}\label{def:ru}
Given an $e$-witness $\vec{u}$, set $R_{\vec{u}}=(\vec{r}_0,\vec{r}_1, \dots, \vec{r}_{e-1})$ to be the list of remainder vectors that arose in the computation of $\vec{u}$ in Algorithm~\ref{algo:first}. (Note that $\vec{r}_0 = \vec{b}+\ones$.)
\end{definition}

For the sake of completeness, in the following remark, we explicitly describe the $\vec{r}_i$.

\begin{remark}\label{r:remainder}
Because Algorithm~\ref{algo:first} computes each $\vec{u}$ exactly once, the remainder vector depends only on $\vec{u}$. Specifically, if we write
\[
\vec{u} = \begin{pmatrix} u_1 \\ u_2 \\ \vdots \\ u_n \end{pmatrix}
= \begin{pmatrix} 0.u_{1,1} u_{1,2} \dots u_{1,e} \\ 0.u_{2,1} u_{2,2} \dots u_{2,e} \\ \vdots \\ 0.u_{n,1} u_{n,2} \dots u_{n,e} \end{pmatrix} \text{in base } p,\] we set 
\[
\trunc_i(u) = 
\begin{pmatrix}
0.u_{1,1} u_{1,2} \dots u_{1,i} \\ 0.u_{2,1} u_{2,2} \dots u_{2,i} \\ \vdots \\ 0.u_{n,1} u_{n,2} \dots u_{n,i}
\end{pmatrix}.
\]
Then
\[
\vec{r}_i = p^i (\vec{b}+\ones-A\trunc_i(\vec{u})) = p\vec{r}_{i-1}-A\begin{pmatrix} u_{1,i} \\ \vdots \\ u_{n,i} \end{pmatrix}.
\]
\end{remark}

\begin{algo}\label{algo:second}
Fix a monomial $\vec{x}^\vec{b} \notin I$. We will compute $\lambda_\vec{b}$ and a convergent family of witnesses $\{\vec{v}_e\}$. We initialize $\Lambda$, the set of repeating candidates, to be $\Lambda =\varnothing$. At each base $p$ decimal place, we compute the set $\mathcal{L}_e=\{(\vec{u},R_{\vec{u}})\}$ of not yet repeating witnesses and their corresponding remainder vectors as follows.

Inductively, compute $\mathcal{L}_{e-1}$. For each pair $(\vec{u},R_{\vec{u}}) = (\vec{u},(\vec{r}_0,\vec{r}_1,\dots,\vec{r}_{e-2})) \in \mathcal{L}_{e-1}$, do the following.

\begin{enumerate}
\item Compute the next remainder vector $\vec{r}_{e-1}=\vec{b}+\ones-A\vec{u}$.
\item If $\vec{r}_{e-1} = \vec{r}_c$ for some $c<e-1$, then add
\[\widetilde{\vec{u}}=
\begin{pmatrix}
0.u_{1,1} u_{1,2} \dots u_{1,c-1} \overline{u_{1,c} \dots u_{1,e-1}} \\ 0.u_{2,1} u_{2,2} \dots u_{2,c-1} \overline{u_{2,c} \dots u_{2,e-1}} \\ \vdots \\ 0.u_{n,1} u_{n,2} \dots u_{n,c-1} \overline{u_{n,c} \dots u_{n,e-1}}
\end{pmatrix}
\] to $\Lambda$.
\item If not, find all solutions $\vec{v} \in \mathbb{N}^n$ to $A \vec{v} \prec p^{e}\vec{r}$ such that $\norm{\vec{v}}<p$ maximizing $\norm{\vec{v}}$ (subject to these constraints), and append each pair $(\vec{u}+\frac{1}{p^e} \vec{v},(\vec{r}_0, \dots, \vec{r}_{e-1}))$ to the list of candidates $\mathcal{L}_{e}$.
\end{enumerate}

After doing this for all $\vec{u} \in \mathcal{L}_{e-1}$, compute $\lambda_{e} = \max\{\norm{\vec{w}} : \vec{w} \in \mathcal{L}_{e}\}$, and delete from $\mathcal{L}_{e}$ every $(\vec{w},R_{\vec{w}})$ with $\norm{\vec{w}} < \lambda_{e}$.

Since there are finitely many possible remainder vectors by Lemma~\ref{l:replacelarge}, for sufficiently large $e$, $\mathcal{L}_e$ will be empty. At each stage, we add only finitely many vectors to $\Lambda$, so $\Lambda$ is finite at the end of this process. Choose $\vec{u}_{\ast} \in \Lambda$ maximizing $\norm{\vec{u}_{\ast}}$, and output $\lambda_{\vec{b}} = \norm{\vec{u}_{\ast}}$ and the family of witnesses $\vec{u}_{\ast}$.
\end{algo}

We conclude the proof of the correctness of Algorithm~\ref{algo:second} with the following lemma.

\begin{lemma}\label{l:getmax}
Suppose $(\vec{u}_{\ast},\norm{\vec{u}_{\ast}})$ is the output of Algorithm~\ref{algo:second}, and $\vec{v}$ is an $e$-witness to $\lambda_{\vec{b}}$. Then $\norm{\vec{u_{\ast}}} > \norm{\vec{v}}$.
\end{lemma}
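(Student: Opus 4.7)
The plan is to prove the stronger statement $\norm{\vec{u}_\ast} = \lambda_\vec{b}$. Since any $e$-witness $\vec{v}$ satisfies $\norm{\vec{v}} = \trunc_e(\lambda_\vec{b})$ by Remark~\ref{rem:truncation_witness}, and $\trunc_e(\lambda_\vec{b}) < \lambda_\vec{b}$ by Remark~\ref{rem:truncation_strictly_less}, this immediately yields $\norm{\vec{u}_\ast} > \norm{\vec{v}}$, as desired.

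For the upper bound $\norm{\vec{u}_\ast} \leq \lambda_\vec{b}$, I would verify that $\vec{u}_\ast \in \bar{\SS}_{p,n}$ and $A\vec{u}_\ast \leq \vec{b}+\ones$, after which the Corollary to Theorem~\ref{t:lambdabwithfractal} completes this direction. Admissibility holds because the digit vectors at each base-$p$ place of $\vec{u}_\ast$ have norm strictly less than $p$, so there are no carries. The matrix inequality follows by taking the limit of the strict inequalities $A\tau_E(\vec{u}_\ast) \prec \vec{b}+\ones$ enforced at every finite truncation by the construction in Algorithm~\ref{algo:second}.

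For the lower bound $\norm{\vec{u}_\ast} \geq \lambda_\vec{b}$, I would proceed in two stages. First, I would establish greedy optimality of the underlying Algorithm~\ref{algo:first}, namely $\lambda_E := \max_{\vec{u}\in\mathcal{L}_E}\norm{\vec{u}} = \trunc_E(\lambda_\vec{b})$ for every $E$. The induction step uses that given any $E$-witness $\vec{w}$, its truncation $\tau_{E-1}(\vec{w})$ lies in $\mathcal{L}_{E-1}$ and its $E$-th digit vector $\vec{w}^{(E)}$ is feasible in Step~2; if $\vec{w}^{(E)}$ were not of maximal norm, swapping it for a max-norm choice would produce a valid vector of norm exceeding $\trunc_E(\lambda_\vec{b})$, contradicting Remark~\ref{rem:truncation_witness}. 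Second, because the possible remainder vectors are finite by Lemma~\ref{l:replacelarge}, every maximal branch eventually revisits some remainder $\vec{r}_c = \vec{r}_{e-1}$. Since the feasibility constraint $A\vec{v} \prec p\vec{r}_{e-1}$ at step $e$ depends only on $\vec{r}_{e-1}$, repeating the cyclic choice from positions $c+1, \ldots, e-1$ forever yields a periodic vector $\tilde{\vec{u}} \in \Lambda$ whose every truncation $\tau_E(\tilde{\vec{u}})$ remains in $\mathcal{L}_E$. Hence $\norm{\tau_E(\tilde{\vec{u}})} = \trunc_E(\lambda_\vec{b})$, and letting $E\to\infty$ gives $\norm{\tilde{\vec{u}}} = \lambda_\vec{b}$. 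Maximality of $\vec{u}_\ast$ over $\Lambda$ then forces $\norm{\vec{u}_\ast} \geq \lambda_\vec{b}$.

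The main obstacle I foresee is the greedy optimality step: confirming that the successive pruning performed in Step~4 of each stage does not discard a branch whose cyclic continuation would attain $\lambda_\vec{b}$. The argument works because the constraint at position $e$ depends only on the cumulative slack $\vec{r}_{e-1}$, so any max-norm choice at one step remains compatible with a max-norm continuation, making the greedy path a genuine optimum in the periodic limit.
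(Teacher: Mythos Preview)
Your approach differs from the paper's. The paper argues directly: given an $e$-witness $\vec{v}$, it traces $\vec{v}$ through Algorithm~\ref{algo:second} via its remainder list $R_{\vec{v}}$. If the remainders are all distinct, $\vec{v}$ itself lands in $\mathcal{L}_e$, and any $\vec{w}$ later deposited into $\Lambda$ dominates it; if some remainder repeats at positions $c<d$, the periodic extension built from $\tau_d(\vec{v})$ is already in $\Lambda$ with strictly larger norm. Your route instead proves the stronger equality $\norm{\vec{u}_\ast}=\lambda_\vec{b}$ first. That is a legitimate and in some ways more informative strategy, and your upper bound and greedy-optimality step (that $\lambda_E=\trunc_E(\lambda_\vec{b})$ in Algorithm~\ref{algo:first}) are fine.

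There is, however, a real gap in your lower bound. You assert that once a maximal branch revisits a remainder $\vec{r}_{e-1}=\vec{r}_c$, the periodic continuation $\tilde{\vec{u}}$ has $\tau_E(\tilde{\vec{u}})\in\mathcal{L}_E$ for every $E$. Your justification---that the feasibility constraint at step $e$ depends only on $\vec{r}_{e-1}$---establishes only that the truncations $\tau_E(\tilde{\vec{u}})$ remain \emph{feasible}, not that they remain \emph{maximal}. The $E$-th digit of $\lambda_\vec{b}$ is the maximum digit sum over \emph{all} branches in $\mathcal{L}_{E-1}$, not just the one you are following. At step $c+1$ there may have been a second branch in $\mathcal{L}_c$ whose remainder allowed the same digit sum $a_{c+1}$ but which subsequently leads to larger digit sums; that branch need not be present at step $e-1$, and conversely a different branch with a larger next digit may be. So nothing prevents $a_e>a_{c+1}$, in which case $\tau_e(\tilde{\vec{u}})$ is pruned and $\norm{\tilde{\vec{u}}}<\lambda_\vec{b}$. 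Your final paragraph anticipates exactly this obstacle but does not resolve it: ``compatible with a max-norm continuation'' is weaker than ``is itself the max-norm continuation among all surviving branches.''

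The paper sidesteps this difficulty by never claiming any particular element of $\Lambda$ equals $\lambda_\vec{b}$; it only needs an element exceeding the given $\trunc_e(\lambda_\vec{b})$, which is easier. If you want to salvage your approach, one fix is to work with the finite set of remainders appearing in $\mathcal{L}_E$ (Algorithm~\ref{algo:first}) as $E$ varies: this sequence of \emph{sets} is eventually periodic, and a pigeonhole argument on the induced self-map then produces a single branch whose remainder sequence is genuinely eventually periodic and whose truncations are $E$-witnesses for all $E$.
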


\begin{proof}
Compute $R_{\vec{v}}$, which is possible by Remark~\ref{r:remainder}. If $R_{\vec{v}}$ has all unique entries, then Algorithm~\ref{algo:second} enters $\vec{v}$ into $\mathcal{L}_e$. Every $(\vec{u},R_{\vec{u}})$ in $\mathcal{L}_e$ has $\norm{\vec{u}} \ge \norm{\vec{v}}$, so all entries $\vec{w} \in \Lambda$ arising after the $(e+1)^\text{st}$ step have $\norm{\vec{w}} > \norm{\vec{v}}$.

If $R_{\vec{v}}$ does not have all unique entries, then $R_d=R_c$ for some $c < d \le e-1$. Then $\vec{w}=v_1v_2 \dots \overline{v_c v_{c+1} \dots v_{d-1}} \in \Lambda$, and $\norm{\vec{w}} > \norm{\vec{v}}$.
\end{proof}

We end the section with two corollaries. 

\begin{corollary}\label{cor:rational}
Let $I$ be a monomial ideal. Then all critical exponents of $I$ are rational.
\end{corollary}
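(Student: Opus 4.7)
The plan is to read rationality directly off the output of Algorithm~\ref{algo:second}. First, by the proposition in Section~\ref{section:background} stating that every critical exponent of $I$ is of the form $\lambda_\vec{b}(I)$ for some $\vec{b} \in \NN^n$, it suffices to show each $\lambda_\vec{b}(I)$ is rational.

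Run Algorithm~\ref{algo:second} on $\vec{x}^\vec{b}$. Its correctness (Lemma~\ref{l:getmax} combined with Lemma~\ref{l:replacelarge}, which ensures that only finitely many remainder vectors arise and hence that the algorithm terminates) guarantees that it outputs $\lambda_\vec{b}(I) = \norm{\vec{u}_*}$ for some $\vec{u}_* \in \Lambda$. By the construction in Step~(2), each coordinate of $\vec{u}_*$ is recorded as an eventually periodic base-$p$ expansion of the form
\[ 0.u_{i,1}\cdots u_{i,c-1}\overline{u_{i,c}\cdots u_{i,e-1}}. \]
Summing the standard geometric series representation of such an expansion shows that each coordinate is a rational number (explicitly, of the form $M/(p^{c-1}(p^{e-c}-1))$ for some integer $M$). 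Therefore $\norm{\vec{u}_*}$ is a finite sum of rationals, hence rational.

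There is no real obstacle here: rationality is essentially baked into the way Algorithm~\ref{algo:second} terminates. The conceptual content lies entirely in the algorithm's design — detecting a repeated remainder $\vec{r}_{e-1} = \vec{r}_c$ forces the corresponding base-$p$ expansion to be periodic from position $c$ onward, and eventually periodic base-$p$ expansions are rational by the familiar one-line geometric series argument. The only thing worth checking explicitly, which follows from Lemmas~\ref{lem:ewitnessesexist} and \ref{l:getmax}, is the identification $\lambda_\vec{b}(I) = \norm{\vec{u}_*}$; everything else is bookkeeping.
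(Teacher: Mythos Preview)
Your proposal is correct and matches the paper's approach: the paper states Corollary~\ref{cor:rational} without proof, treating it as an immediate consequence of Algorithm~\ref{algo:second}, and you have spelled out exactly why---the output $\vec{u}_*$ has eventually periodic base-$p$ coordinates, hence rational norm. There is nothing to add.
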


\begin{cor} \label{cor:multbyp}
Let $I$ be a monomial ideal. Suppose that $\lambda = \lambda_{\vec{b}}$ is a critical exponent for $I$. Then (the fractional part of) $p \lambda$ is also a critical exponent for $I$.
\end{cor}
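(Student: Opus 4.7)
The plan is to construct, from a witness for $\lambda = \lambda_\vec{b}$, a witness for a related critical exponent $\lambda_{\vec{b}'}(I)$ equal to the fractional part of $p\lambda$, by shifting the base-$p$ expansion. By Proposition~\ref{prop:conv_family}, fix a convergent family of witnesses $\{\vec{v}_e\}$ with limit $\vec{v}$. Using the nonterminating convention of Remark~\ref{r:admissible}, write $v_i = \sum_{j\geq 1} v_{i,j}\, p^{-j}$ where $\sum_i v_{i,j} < p$ for every $j$. Because the family is convergent, the first-digit vector $\vec{s} := (v_{1,1}, \dots, v_{n,1})$ is the same for every $\vec{v}_e$. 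I would then define
\[
\vec{w}_e := p\vec{v}_e - \vec{s}, \qquad \vec{b}' := p\vec{b} + (p-1)\ones - A\vec{s},
\]
so that $\vec{w}_e$ is obtained from $\vec{v}_e$ by peeling off its first base-$p$ digit.

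I would next show that $\{\vec{w}_e\}$ is a convergent family of witnesses for $\lambda_{\vec{b}'}$, establishing $\lambda_{\vec{b}'}(I) \geq \{p\lambda\}$. Admissibility of $\vec{w}_e$ is inherited since its digit sums are the tail of $\vec{v}_e$'s. A direct computation gives $A\vec{w}_e = pA\vec{v}_e - A\vec{s} \prec p(\vec{b}+\ones) - A\vec{s} = \vec{b}' + \ones$. To verify $\vec{b}' \in \NN^n$, observe $\vec{s} \leq p\vec{v}_e$, so $A\vec{s} \leq pA\vec{v}_e \prec p(\vec{b}+\ones)$; since $A\vec{s}$ is an integer vector, strictness upgrades to $A\vec{s} \leq p(\vec{b}+\ones) - \ones$, giving $\vec{b}' \geq \vec{0}$. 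Finally, $\norm{\vec{w}_e} = p\norm{\vec{v}_e} - \norm{\vec{s}} \to p\lambda - \norm{\vec{s}}$, and the no-carry expansion shows the first base-$p$ digit of $p\lambda$ is exactly $\norm{\vec{s}}$, so the limit equals $\{p\lambda\}$.

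For the reverse inequality, I would invert the shift: given any witness $\vec{z}$ for $\lambda_{\vec{b}'}$, form $\vec{z}' := \tfrac{1}{p}(\vec{z}+\vec{s})$, which prepends $\vec{s}$ as the first-digit vector of $\vec{z}'$. Since $\norm{\vec{s}} < p$ (the no-carry condition on $\vec{v}$) and the subsequent digits come from the admissible $\vec{z}$, the vector $\vec{z}'$ is admissible. Moreover, $A\vec{z}' = \tfrac{1}{p}(A\vec{z}+A\vec{s}) \prec \tfrac{1}{p}(\vec{b}'+\ones+A\vec{s}) = \vec{b}+\ones$, so $\vec{z}'$ is a witness for $\lambda_\vec{b}$. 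Hence $\norm{\vec{z}'} \leq \lambda$, giving $\norm{\vec{z}} = p\norm{\vec{z}'} - \norm{\vec{s}} \leq p\lambda - \norm{\vec{s}} = \{p\lambda\}$. Taking suprema yields $\lambda_{\vec{b}'} \leq \{p\lambda\}$, and combined with the earlier direction we obtain $\lambda_{\vec{b}'}(I) = \{p\lambda\}$, a critical exponent of $I$.

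I expect the main obstacle to be the bookkeeping around base-$p$ expansions. The construction hinges on having a well-defined and consistent first-digit vector $\vec{s}$, forcing the use of the nonterminating convention (Remarks~\ref{rem:truncation_strictly_less} and~\ref{r:admissible}) so that $\vec{s}$ is stable across the convergent family and the ``fractional part'' $\{p\lambda\}$ is interpreted as an element of $(0,1]$ (matching Skoda, even in the boundary case $p\lambda \in \ZZ$). The secondary point requiring care is the integrality step that upgrades the strict inequality $A\vec{s} \prec p(\vec{b}+\ones)$ to $A\vec{s} \leq p(\vec{b}+\ones) - \ones$, which is precisely what lets $\vec{b}'$ have nonnegative coordinates.
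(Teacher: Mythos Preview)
Your approach is essentially the paper's: both strip the first base-$p$ digit from a convergent family $\{\vec{v}_e\}$ to obtain witnesses for $\lambda_{\vec{b}'}$ with $\vec{b}'=p(\vec{b}+\ones)-A\vec{s}-\ones$ (the paper's $\vec{r}$, with $p\vec{v}_1$ playing the role of your $\vec{s}$). Your treatment is in fact more complete than the paper's one-line argument, since you explicitly supply the reverse inequality $\lambda_{\vec{b}'}\leq\{p\lambda\}$ and the check that $\vec{b}'\geq\vec{0}$, both of which the paper leaves implicit.
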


\begin{proof}
Suppose $\{\vec{v}_e\}$ is a convergent family of witnesses for $\lambda$. Then $\{p(\vec{v}_{e}-\vec{v}_{1})\}$ is a family of witnesses for $\lambda_{\vec{r}}$, where $\vec{r}=p(\vec{b}+\ones)-A\vec{v}_{1}-\ones$.
\end{proof}

\section{Examples of Frobenius Powers}\label{section:examples}

\begin{prop}
Let $S = \kk[x, y, z]$ and $I = (x^2y^2, y^dz^d)$ where $d \geq 2$. Then $\lce(I) = \frac{1}{2}$.
\end{prop}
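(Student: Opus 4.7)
The plan is to compute $\lce(I) = \lambda_{\vec{0}}(I)$ directly from Theorem~\ref{t:lambdabwithfractal}. The exponent-vector matrix has columns $(2, 2, 0)^{T}$ and $(0, d, d)^{T}$, so the condition $\floor{A\vec{u}} \leq \vec{0}$ on $\vec{u} = (u_1, u_2)$ unfolds to the three strict inequalities $2u_1 < 1$, $2u_1 + du_2 < 1$, and $du_2 < 1$. Hence
\[
\lce(I) = \sup\bigl\{\, u_1 + u_2 : (u_1, u_2) \in \SS_{p,2},\ 2u_1 < 1,\ 2u_1 + du_2 < 1,\ du_2 < 1 \,\bigr\}.
\]

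For the upper bound $\lce(I) \leq \tfrac{1}{2}$, I would use the middle inequality together with the hypothesis $d \geq 2$ and $u_2 \geq 0$ to obtain
\[
2(u_1 + u_2) = 2u_1 + 2u_2 \leq 2u_1 + du_2 < 1,
\]
so every feasible $\vec{u}$ has $\norm{\vec{u}} < \tfrac{1}{2}$.

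For the lower bound $\lce(I) \geq \tfrac{1}{2}$, I would exhibit feasible vectors whose norms tend to $\tfrac{1}{2}$. Setting $u_2 = 0$ kills the second generator, so admissibility in $\SS_{p,2}$ is automatic and the only active constraint reduces to $2u_1 < 1$. Choosing any sequence of terminating base-$p$ decimals $u_1^{(k)} \nearrow \tfrac{1}{2}$---for instance, $u_1^{(k)} = \tfrac{1}{2} - \tfrac{1}{2p^k}$ when $p$ is odd and $u_1^{(k)} = \tfrac{1}{2} - \tfrac{1}{2^k}$ when $p = 2$---produces vectors $(u_1^{(k)}, 0) \in \SS_{p,2}$ satisfying all three constraints, whose norms approach $\tfrac{1}{2}$.

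This is essentially a $d$-independent generalization of the $d = 3$ computation in Example~\ref{e:babypolytopes}, and no serious obstacle arises: the upper bound depends only on $d \geq 2$, and the lower bound ignores the second generator entirely. The one conceptual point worth flagging is the role of strict inequality: the supremum $\tfrac{1}{2}$ is not attained by any individual admissible $\vec{u}$, in accord with the phenomenon illustrated in Example~\ref{e:openfractalonly} where the sup in Proposition~\ref{p:lambdabmax} cannot in general be replaced by a max.
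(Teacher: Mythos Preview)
Your proof is correct. The lower bound matches the paper's: both take $u_2 = 0$ and let $u_1$ approach $\tfrac{1}{2}$ from below (the paper phrases this as taking $\vec{u} = (k,0)$ with $k < q/2$ in Proposition~\ref{monomial:Frobenius:power:generators}, you phrase it via Theorem~\ref{t:lambdabwithfractal}).

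The upper bound is where the arguments diverge. You read it off directly from the middle linear constraint $2u_1 + du_2 < 1$ together with $d \geq 2$, which immediately forces $\norm{\vec{u}} < \tfrac12$ for every feasible $\vec{u}$. The paper instead computes $I^{[1/2]}$ explicitly: for $p = 2$ it is $(xy, y^{\floor{d/2}}z^{\floor{d/2}})$, and for odd $p$ it shows $I^{[\ell/q]} = (y)$ for $\ell = \floor{q/2}+1$ and all $q > d$, hence $I^{[1/2]} = (y)$ by passing to the limit. Your argument is shorter and uniform in $p$; the paper's argument does more work but delivers the extra information of exactly what $I^{[1/2]}$ is, which feeds into the later determination of all Frobenius powers in Proposition~\ref{prop:squarecube}.
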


\begin{proof}
First, we will show that $I^{[\frac{k}{q}]} = S$ for all $\frac{k}{q} < \frac{1}{2}$.  By Proposition \ref{monomial:Frobenius:power:generators}, we have 
\[ I^{[\frac{k}{q}]} = (x^{\floor{\frac{2u_1}{q}}}y^{\floor{\frac{2u_1 + du_2}{q}}}z^{\floor{\frac{du_2}{q}}} \mid u_1, u_2 \geq 0, u_1 + u_2 = k, {\textstyle \binom{k}{u_1}} \not\equiv 0 \!\!\! \pmod{p} ) \]
If $\frac{k}{q} < \frac{1}{2}$, we can take $u_1 = k$ and $u_2 = 0$ to see that $I^{[\frac{k}{q}]}$ contains  $x^{\floor{\frac{2k}{q}}}y^{\floor{\frac{2k}{q}}} = 1$.  

Next, we will compute $I^{[\frac{1}{2}]}$.  If $\ch(\kk) = 2$, it is immediate that $I^{[\frac{1}{2}]} = (xy, y^{\floor{\frac{d}{2}}}z^{\floor{\frac{d}{2}}})$, so we may assume that $\ch(\kk) \neq 2$.  In that case, we will show that $I^{[\frac{\ell}{q}]} = (y)$ for $\ell = \floor{\frac{q}{2}} + 1$ and all $q > d$.  Since $\ch(\kk) \neq 2$, we know that $q = 2\floor{\frac{q}{2}} + 1$ so that $\frac{\ell}{q} = \frac{2\floor{\frac{q}{2}} + 2}{2q} = \frac{q+1}{2q} = \frac{1}{2} + \frac{1}{2q} \to \frac{1}{2}$ as $q \to \infty$.  Hence, it will follow that $I^{[\frac{1}{2}]} = (b)$ so that $\lce(I) = \frac{1}{2}$.  Taking $u_1 = \ell - 1$ and $u_2 = 1$, we note that $\ell = \binom{\ell}{u_1} \not\equiv 0 \pmod{p}$, otherwise we would have $0 \equiv 2\ell = q +1 \equiv 1 \pmod{p}$.  And so, $I^{[\frac{\ell}{q}]}$ contains $x^{\floor{\frac{2(\ell -1)}{q}}}y^{\floor{\frac{2\ell+d-2}{q}}}z^{\floor{\frac{d}{q}}} = x^{\floor{\frac{2(\ell -1)}{q}}}y^{1 + \floor{\frac{d-1}{q}}}z^{\floor{\frac{d}{q}}} = y$ if $q > d$.  On the other hand, for any other $u_1, u_2 \geq 0$ with $u_1 + u_2 = \ell$ and $\binom{\ell}{u_1} \not\equiv 0 \pmod{p}$, we see that $x^{\floor{\frac{2u_1}{q}}}y^{\floor{\frac{2u_1 + du_2}{q}}}z^{\floor{\frac{du_2}{q}}} = x^{\floor{\frac{2u_1}{q}}}y^{\floor{\frac{2\ell + u_2}{q}}}z^{\floor{\frac{du_2}{q}}} = x^{\floor{\frac{2u_1}{q}}}y^{1 + \floor{\frac{1 + u_2}{q}}}z^{\floor{\frac{du_2}{q}}}$ is divisible by $b$ so that $I^{[\frac{\ell}{q}]} = (y)$ for all $q > d$.
\end{proof}

\begin{center}
\scalebox{0.7}{
\begin{tikzpicture}
\draw[gray, very thin] (0, 0) grid (12.9, 12.9);
\draw[->] (-1,0) -- (13,0);
\draw[->] (0, -1) -- (0, 13); 
\node[above] (u2) at (0, 13) {$u_2$};
\node[right] (u1) at (13, 0) {$u_1$};
\node[below] at (6, 0) {$\frac{q}{2}$};
\node[below] at (12, 0) {$q$};
\draw[thick, dashed] (6, 0) -- (6, 13);
\node[above] at (6, 13) {$a^1$}; 
\draw[thick, dashed] (12, 0) -- (12, 13);
\node[above] at (12, 13) {$a^2$}; 
\node[left] at (0, 4) {$\frac{q}{3}$};
\node[left] at (0, 8) {$\frac{2q}{3}$};
\node[left] at (0, 12) {$q$};
\draw[thick, dashed] (0, 4) -- (13, 4);
\node[right] at (13, 4) {$c^1$}; 
\draw[thick, dashed] (0, 8) -- (13, 8);
\node[right] at (13, 8) {$c^2$}; 
\draw[thick, dashed] (0, 12) -- (13, 12);
\node[right] at (13, 12) {$c^3$};
\draw[thick, dashed] (0, 4) -- (6, 0);
\node[above right] at (3, 2) {$b^1$}; 
\draw[thick, dashed] (0, 8) -- (12, 0);
\node[above right] at (6, 4) {$b^2$}; 
\draw[thick, dashed] (0, 12) -- (13, 10/3);
\node[above right] at (6, 8) {$b^3$}; 
\node at (1.5, 1.5) {\Large $1$};
\node at (4.5, 2.5) {\Large $b$};
\node at (8, 1.5) {\Large $ab$};
\node at (10.5, 2.5) {\Large $ab^2$};
\node at (1.5, 5.5) {\Large $bc$};
\node at (1.5, 9.5) {\Large $b^2c^2$};
\node at (8, 5.5) {\Large $ab^2c$};
\node at (4.5, 6.5) {\Large $b^2c$};
\node at (10.5, 6.5) {\Large $ab^3c$};
\node at (13, 1.5) {\Large $a^2b^2$};
\draw[very thick, blue] (12, 0) -- (0, 12);
\node[blue, above right] at (3, 9) {$I = (a^2b^2, b^3c^3)$};
\draw[very thick, blue] (10, 0) -- (0, 10);
\node[blue, above right] at (3, 7) {$I^{[j_3]} = (ab, b^2c)$};
\draw[very thick, blue] (6, 0) -- (0, 6);
\node[blue, above right] at (3, 3) {$I^{[1/2]} = (b)$};
\end{tikzpicture}
}
\end{center}

To demonstrate the tools from the previous section, we compute the Frobenius powers for a specific example and show how the characteristic of $K$ can affect the results.

\begin{prop}\label{prop:squarecube}
Let $S = \kk[x, y, z]$ and $I = (x^2y^2, y^3z^3)$.  Then:
\begin{enumerate}[label = \textnormal{(\alph*)}]
\item If $p=2$, then:
\[ I^{[t]} = \left\{\begin{array}{cc} 
S, & t \in [0, \frac{1}{2}) \\[1 ex] 
(xy, yz), & t \in [\frac{1}{2}, \frac{3}{4}) \\[1 ex]
(xy, y^2z), & t \in [\frac{3}{4}, 1)
\end{array}\right. \]
\item If $p = 3$, then:
\[ I^{[t]} = \left\{\begin{array}{cc} 
S, & t \in [0, \frac{1}{2}) \\[1 ex] 
(y), & t \in [\frac{1}{2}, \frac{2}{3}) \\[1 ex]
(xy, yz), & t \in [\frac{2}{3}, \frac{5}{6}) \\[1 ex]
(xy, y^2z), & t \in [\frac{5}{6}, 1)
\end{array}\right. \]
\item If $p \equiv 1 \pmod{3}$, then:
\[ I^{[t]} = \left\{\begin{array}{cc} 
S, & t \in [0, \frac{1}{2}) \\[1 ex] 
(y), & t \in [\frac{1}{2}, \frac{5}{6}) \\[1 ex]
(xy, y^2z), & t \in [\frac{5}{6}, 1)
\end{array}\right. \]
\item If $p \equiv 2 \pmod{3}$ and $p \neq 2$, then:
\[ I^{[t]} = \left\{\begin{array}{cc} 
S, & t \in [0, \frac{1}{2}) \\[1 ex] 
(y), & t \in [\frac{1}{2}, \frac{5p-1}{6p}) \\[1 ex]
(xy, y^2z), & t \in [\frac{5p-1}{6p}, 1)
\end{array}\right. \]
\end{enumerate}
\end{prop}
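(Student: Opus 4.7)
The plan is to verify each of the four piecewise descriptions interval-by-interval, using the explicit combinatorial formula of Proposition~\ref{monomial:Frobenius:power:generators} together with the geometric characterization of critical exponents from Theorem~\ref{t:lambdabwithfractal}. For this ideal, the matrix is
\[
A = \begin{pmatrix} 2 & 0 \\ 2 & 3 \\ 0 & 3 \end{pmatrix},
\]
so a monomial $\vec{x}^\vec{b} = x^{b_1}y^{b_2}z^{b_3}$ has critical exponent
\[
\lambda_\vec{b}(I) = \sup\{u_1+u_2 : (u_1,u_2) \in \bar{\mathcal{S}}_{p,2},\ 2u_1 < b_1+1,\ 2u_1+3u_2 < b_2+1,\ 3u_2 < b_3+1\}.
\]
The initial interval $[0, 1/2)$, with $I^{[t]} = S$, holds in every characteristic and is immediate from the preceding proposition specialized to $d = 3$, which establishes $\lce(I) = 1/2$.

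For each remaining interval $[t_{i-1}, t_i)$ with its claimed value $J$, I would verify both containments. For the direction $J \subseteq I^{[t]}$ on the interval, I would argue generator by generator: for each claimed minimal generator $\vec{x}^\vec{b}$ of $J$, I would exhibit a convergent family of witnesses — concretely, an admissible $(u_1, u_2) \in \bar{\mathcal{S}}_{p,2}$ satisfying the three strict inequalities with $u_1 + u_2 = t_i$. The specific witnesses depend on the characteristic through which base-$p$ representations allow carry-free addition. For example, in case (a) the dyadic vector $(1/2, 1/4)$ is admissible via $1/2 = 0.0\overline{1}_2$; in case (b) the vector $(1/2, 1/3)$ is admissible in base $3$ since $1/2 = 0.\overline{1}_3$ and $1/3 = 0.1_3$ sum to $0.2\overline{1}_3 = 5/6$ without carry; and in cases (c) and (d) one uses the base-$p$ expansions of $1/2 = 0.\overline{(p-1)/2}_p$ and of $1/3$ (whose period length is $1$ when $p \equiv 1 \pmod 3$ and $2$ when $p \equiv 2 \pmod 3$) and checks that the column sums of digits stay strictly below $p$.

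For the reverse containment $I^{[t]} \subseteq J$, I would bound $\lambda_\vec{b}(I) \leq t_{i-1}$ for every $\vec{b}$ with $\vec{x}^\vec{b} \notin J$. The three linear inequalities cut out a triangle in $(u_1, u_2)$-space whose vertices give the unconstrained geometric maxima of $u_1 + u_2$; the admissibility constraint must then be shown to prevent approaching these maxima except by the claimed amount. The main obstacle is the case $p \equiv 2 \pmod 3$, where the length-two period of $1/3$ in base $p$ forces a careful digit-by-digit analysis and produces the non-obvious denominator $6p$ in the critical exponent $(5p-1)/(6p)$; I expect this to require exhibiting an explicit blocking pattern showing that no admissible vector can simultaneously approach $(1/2, 1/3)$ closely enough to reach sum $5/6$ while keeping $2u_1 + 3u_2 < 2$, and a corresponding admissible family saturating the weaker bound.

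Finally, once $\lambda_\vec{b}(I)$ is determined for each candidate exponent vector, the Frobenius power $I^{[t]}$ on each interval is generated by those $\vec{x}^\vec{b}$ with $\lambda_\vec{b}(I) > t$, and right constancy guarantees that $I^{[t]}$ is constant on each half-open interval $[t_{i-1}, t_i)$ as claimed. The case distinction in the statement is then simply the bookkeeping for which set of minimal generators survives each transition, dictated entirely by the admissibility structure of $\bar{\mathcal{S}}_{p,2}$.
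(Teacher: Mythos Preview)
Your proposal is correct and follows essentially the same approach as the paper: compute each $\lambda_\vec{b}$ by analyzing the base-$p$ digit expansions of $\tfrac{1}{2}$ and $\tfrac{1}{3}$, exhibit admissible witness families, and argue that no admissible vector can exceed the claimed bound. The paper carries this out explicitly only for $\lambda_{(0,1,0)}$ and declares the remaining cases similar, so your more systematic interval-by-interval plan is, if anything, more thorough; the one minor slip is that your sample witness $(1/2,1/3)$ in case~(b) lies on the boundary rather than satisfying the strict inequalities, but your reference to convergent families of truncations resolves this exactly as the paper does.
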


\begin{proof}
		We compute $\lambda_{(0,1,0)}$; the other computations are similar.  A (family of) witnesses will be a collection of pairs $\mathbf{v}_{e}=(a_{e},b_{e})$ satisfying
		\[
			\begin{pmatrix}2&0\\2&3\\0&3\end{pmatrix}\begin{pmatrix}a_{e}\\b_{e}\end{pmatrix}<\begin{pmatrix}1\\2\\1\end{pmatrix}.
		\]				
		That is, $a<\frac{1}{2}$ and  $b<\frac{1}{3}$ such that $a+b$ adds without carries in base $p$.

	If $p=2$, the binary representations of $\frac{1}{2}$ and $\frac{1}{3}$ are $.0\overline{1}$ and $.\overline{01}$, respectively.  Thus $a_{e}$ and $b_{e}$ must each have a zero in the first decimal place; the best we can do for the sum without a carry is $.0\overline{1}$.  This is realized by, for example, taking $a_{e}=\trunc_{e}(\frac{1}{2}), b_{e}=0$.  Thus $\lambda_{(0,1,0)}=\frac{1}{2}$, i.e., $y\in I^{[\frac{k}{q}]}$ if and only if $\frac{k}{q}<\frac{1}{2}$.
	
	If $p=3$, the trinary representations of $\frac{1}{2}$ and $\frac{1}{3}$ are $.\overline{1}$ and $.0\overline{2}$, respectively. Thus the first digit of $a_{e}$ must be at most $1$ and the first digit of $b_{e}$ must be a zero.  Without carries, the first digit of $a_{e}+b_{e}$ cannot be more than $1$, so the best we can do for the sum is $.1\overline{2}=\frac{2}{3}$.  This is realized by, for example, taking $a_{e}=.1=$, $b_{e}=\trunc_{e}(\frac{1}{3})$.  Thus $\lambda_{(0,1,0)}=\frac{2}{3}$, i.e., $y\in I^{[\frac{k}{q}]}$ if and only if $\frac{k}{q}<\frac{2}{3}$.

	If $p\equiv 1 \pmod{6}$, set $m=p-1$, $s=\frac{m}{2}$, and $m=\frac{q}{3}$.  The base $p$ representations of $\frac{1}{2}$ and $\frac{1}{3}$ are $.\overline{s}$ and $.\overline{t}$.  Since $s+t<m$, we may add these without carries.  The (unique) family of witnesses is $a_{e}=\trunc_{e}(\frac{1}{2}), b_{e}=\trunc_{e}(\frac{1}{3})$.  We conclude that $\lambda_{(0,1,0)}=\frac{1}{2}+\frac{1}{3}=\frac{5}{6}$, i.e., $y\in I^{[\frac{k}{q}]}$ if and only if $\frac{k}{q}<\frac{5}{6}$.

	If $p\equiv 5 \pmod{6}$, set $m=p-1$, $r=p-2$, $s=\frac{m}{2}$, $t=\frac{r}{3}$, and $w=2t+1$.   The base $p$ representations of $\frac{1}{2}$ and $\frac{1}{3}$ are $.\overline{s}$ and $.\overline{tw}$.
	In order to add without carries, the first digit of $a_{e}+b_{e}$ must be at most $(s+t)$.  But $s+w>p$, so the second digit and all subsequent digits can be arbitrary.  The best we can hope for when adding without carries is $.g\overline{m}$, where $g=s+t$.   One witness is $a_{e}=\trunc_{e}(.\overline{s}), b_{e}=\trunc_{e}(.t\overline{s})$.  Thus $\lambda_{(0,1,0)}=.g\overline{m}=\frac{g+1}{p}=\frac{5p-1}{6p}$, i.e., $y\in I^{[\frac{k}{q}]}$ if and only if $\frac{k}{q}<\frac{5p-1}{6p}$.
\end{proof}

\begin{prop}\label{prop:heightone}
Let $I = (m_1, m_2, \ldots, m_t) \subseteq S = k[x_1, x_2, \ldots, x_n]$ be a monomial ideal of height $1$. Without loss of generality, say $x_1 | m_i$ for all $i$. For all $i$, let $d_i$ denote the power of $x_1$ in $m_i$, and suppose that $d=d_1 < d_i$ for all $i >1$. Suppose also that $m_1=m^{d}$ for some squarefree monomial $m$. Then $\lce(I) = 1/d$. 
\end{prop}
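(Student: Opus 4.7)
The plan is to apply Theorem~\ref{t:lambdabwithfractal} with $\vec{b} = \vec{0}$, which (together with the identity $\lce(I) = \lambda_{\vec{0}}(I)$ recorded in the remark after Definition~\ref{d:lambdab}) rewrites
\[ \lce(I) = \sup\{\norm{\vec{u}} : \vec{u} \in \SS_{p,t},\ A\vec{u} \prec \ones\}, \]
where $A$ is the $n \times t$ exponent matrix of $I$. From here the equality $\lce(I) = 1/d$ will follow from two independent arguments: a lower bound obtained by exhibiting a one-parameter family of admissible vectors (which is where the hypothesis $m_1 = m^d$ squarefree enters), and an upper bound that reads off a single linear inequality from the $x_1$-row of the constraint $A\vec{u} \prec \ones$.

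For the lower bound, the key observation is that since $m_1 = m^d$ with $m$ squarefree, the first column $\vec{a}_1$ of $A$ has all entries in $\{0, d\}$. Thus for any terminating base-$p$ decimal $u_1 \in [0, 1/d)$, the vector $\vec{u} = (u_1, 0, \ldots, 0)$ lies in $\SS_{p,t}$ (admissibility is vacuous since only one coordinate is nonzero) and satisfies $A\vec{u} \prec \ones$, because every entry of $A\vec{u}$ equals $0$ or $du_1 < 1$. Such $u_1$ are dense in $[0, 1/d)$, so the supremum of $\norm{\vec{u}} = u_1$ over this family is $1/d$, giving $\lce(I) \ge 1/d$.

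For the upper bound, take any admissible $\vec{u}$ with $A\vec{u} \prec \ones$. The $x_1$-row of this constraint reads $\sum_{i=1}^t d_i u_i < 1$. Since $d = d_1 \le d_i$ for every $i$ and $u_i \ge 0$, we obtain
\[ d\,\norm{\vec{u}} = d\sum_{i=1}^t u_i \;\le\; \sum_{i=1}^t d_i u_i \;<\; 1, \]
so $\norm{\vec{u}} < 1/d$. Taking the supremum yields $\lce(I) \le 1/d$, and combining with the lower bound gives the desired equality.

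No serious obstacle arises; the proof is essentially two linear inequalities applied to Theorem~\ref{t:lambdabwithfractal}. The content of the hypothesis $m_1 = m^d$ squarefree is precisely to guarantee that the test family $\vec{u} = (u_1, 0, \ldots, 0)$ saturates the $1/d$ threshold: without it, a row $j \neq 1$ of $A$ might have first entry $e > d$, yielding a tighter constraint $e u_1 < 1$ and forcing a strictly smaller supremum.
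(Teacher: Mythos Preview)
Your proof is correct. The lower bound via the one-parameter family $\vec{u}=(u_1,0,\ldots,0)$ and the upper bound via the $x_1$-row of $A\vec{u}\prec\ones$ both go through exactly as you state, and your translation of the condition $\lfloor A\vec{u}\rfloor\le\vec{0}$ into $A\vec{u}\prec\ones$ (using nonnegativity of $A\vec{u}$) is legitimate.

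Your route differs from the paper's primary argument. The paper proves this proposition by running Algorithm~\ref{algo:first}: at each stage it argues that the optimal $\vec{v}$ is $(a_i,0,\ldots,0)^T$ with $a_i$ the largest integer satisfying $a_i d < p^i - \sum_{j<i} p^{i-j} a_j d$, and then observes that $\sum a_i/p^i$ is precisely the nonterminating base-$p$ expansion of $1/d$. This is a long-division computation dressed up in the paper's algorithmic language, and its purpose is partly expository---to illustrate the algorithm on a nontrivial family. Your argument bypasses the algorithm entirely, applying Theorem~\ref{t:lambdabwithfractal} directly with two clean linear inequalities; in particular your upper bound never touches admissibility at all. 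The paper in fact sketches essentially your argument in the paragraph immediately following its proof (producing the convergent family $\vec{v}_e=(\trunc_e(1/d),0,\ldots,0)$), so you have independently found the alternative the authors had in mind. What you gain is brevity and transparency; what the paper's version gains is a worked demonstration of its main algorithm.
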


\begin{proof}
We apply Algorithm \ref{algo:first} to compute $\lce(I)$. Note that the first row of the matrix $A$ is 
\[
\left(\begin{array}{cccc}d & d_2 & \cdots & d_t\end{array}\right),
\]
and the first column of $A$ is composed of entries that are either $0$ or $d$. Since we are computing $\lce(I)$, we set $\vec{b}$ equal to the zero vector. By our observations about the matrix $A$, the vector $\vec{v}$ that maximizes $\norm{\vec{v}}$ and satisfies $A \vec{v} \prec p\ones$ is of the form 
\[
\vec{v}_1 = \left(\begin{array}{c}a_1 \\0 \\\vdots \\0\end{array}\right),
\]
where $a_1$ is the largest integer such that $a_1d < p$ ($a$ could be zero). So we add $\frac{1}{p}\vec{v}_1$ to $\mathcal{L}$. We then compute the remainder $\vec{r}$ via the algorithm, and again we notice that the $\vec{v}$ maximizing $\norm{\vec{v}}$ with $A\vec{v} \prec \vec{r}$ is of the form 
\[
\vec{v}_2 = \left(\begin{array}{c}a_2 \\0 \\\vdots \\0\end{array}\right).
\]
Indeed, working through the algorithm, we see that $a_2$ is the largest integer such that 
\[
a_2 d < p^2 - pa_1d, 
\]
and so $\mathcal{L}$ now contains $\frac{1}{p}\vec{v}_1 + \frac{1}{p^2}\vec{v}_2$, and $\norm{\frac{1}{p}\vec{v}_1 + \frac{1}{p^2}\vec{v}_2} = \frac{a_1}{p} + \frac{a_2}{p^2}$. We perform this algorithm infinitely many times, obtaining
\[
\max \{\norm{\vec{v}}: \vec{v}\in \mathcal{L}\} = \sum_{i = 1}^\infty \frac{a_i}{p^i}, 
\]
where for each $i$, $a_i$ is the largest integer such that 
\[
a_i d < p^i - p^{i-1}a_{i-1}d - p^{i-2}a_{i-2}d - \cdots - pa_1d.
\]
Thus, this sum is simply the infinite expansion of $1/d$. 
\end{proof}

We note that we could have also proven the above by observing that the vector 
\[
\vec{v} = \left(\begin{array}{c}1/d \\0 \\\vdots \\0\end{array}\right)
\]
maximizes $\norm{\vec{v}}$ among all $\vec{v} \in \mathbb{R}^n$ satisfying $A\vec{v} \preceq \ones$. Setting 
\[
\vec{v}_e =  \left(\begin{array}{c}\trunc_e(1/d) \\0 \\\vdots \\0\end{array}\right)
\]
would then produce a convergent family $\{ \vec{v}_e\}$ of witnesses, with $\lce(I) = \lim_{e \rightarrow \infty} \norm{\vec{v}_e} = 1/d$.

By Corollary~\ref{cor:sqfree}, every monomial ideal $I$ containing a squarefree monomial satisfies $I^{[t]}=(1)$. More generally, we can ask the following questions.

\begin{question}\label{q:samepowers}
Under what circumstances do two monomial ideals $I$ and $J$ satisfy $I^{[t]} = J^{[t]}$ for all $t < 1$?
\end{question}

\begin{question}\label{q:lce1}
Can we characterize which monomial ideals $I$ have $\lce(I) = 1$?
\end{question}

We note that Algorithm \ref{algo:first} also proves the converse to Corollary \ref{cor:sqfree} in the case $p = 2$. Indeed, if $I$ contains no squarefree monomial, then the vector $\vec{v} \in \mathbb{N}^n$ maximizing $\norm{\vec{v}}$ with $A \vec{v} \prec 2 \cdot \ones$ is easily seen to be the zero vector, meaning that $\lce(I) < 1$ in this case.

\bigskip

\noindent \textbf{Acknowledgments.} We thank Daniel Hern\'andez, Nishad Mandlik, and Emily Witt for helpful conversations. The work in this paper was partially supported by grant \#422465 from the Simons Foundation to the first author.


\end{spacing}


\section*{Appendix: Some Examples}
\renewcommand{\thesection}{A}

\paragraph{} Here we demonstrate Algorithm \ref{algo:first} in two small cases. Throughout, we use the ideal $I = (x^2y^2, y^3z^3) \subseteq S = \kk[x, y, z]$.  Thus, the corresponding matrix $A$ of exponent vectors is
\[ A = \left(\begin{array}{cc}2 & 0 \\2 & 3 \\0 & 3\end{array}\right). \]

\subsection{First Example} 

\paragraph{} Here, we compute the least critical exponent in the case $p = 3$. That is, we compute $\lambda_{\vec{b}}$ where $\vec{b} = (0, 0, 0)$. \\

We start by initializing $\vec{u} = \left(\begin{array}{c}0 \\0\end{array}\right)$. We now enumerate the steps in Algorithm \ref{algo:first}:

\begin{enumerate}
\item Set $\vec{r} = \vec{b} + \ones - A\vec{u} = \vec{b} + \ones =\left(\begin{array}{c}1 \\1 \\1\end{array}\right) $.

\item Find all solutions to $\left(\begin{array}{cc}2 & 0 \\2 & 3 \\0 & 3\end{array}\right)\vec{v} \prec p^1\vec{r} = 3 \left(\begin{array}{c}1 \\1 \\1\end{array}\right) = \left(\begin{array}{c}3 \\3 \\3\end{array}\right) $ satisfying $\norm{\vec{v}} < 3$ and maximizing $\norm{\vec{v}}$. Clearly there is only one such vector, namely $\vec{v} = \left(\begin{array}{c}1 \\0\end{array}\right)$. (The vector $\left(\begin{array}{c}0 \\0\end{array}\right)$ satisfies the inequality, but does not maximize $\norm{\vec{v}}$.) 

\item Now add $\vec{u} + \frac{1}{3}\vec{v} = \left(\begin{array}{c}0 \\0\end{array}\right) + \frac{1}{3}\left(\begin{array}{c}1 \\0\end{array}\right) = \left(\begin{array}{c} 1/3 \\0\end{array}\right)= $ to the list $\mathcal{L}$ of candidates.
\end{enumerate}

We now repeat the algorithm with $\vec{u} = \left(\begin{array}{c} 1/3 \\0\end{array}\right)$.

\begin{enumerate}
\item Set $\vec{r} = \vec{b} + \ones - A\vec{u} = \left(\begin{array}{c}1 \\ 1 \\ 1\end{array}\right) - \left(\begin{array}{c}2/3 \\ 2/3 \\ 0\end{array}\right) = \left(\begin{array}{c} 1/3 \\ 1/3 \\ 1\end{array}\right)$.

\item Find all solutions to $\left(\begin{array}{cc}2 & 0 \\2 & 3 \\0 & 3\end{array}\right)\vec{v} \prec p^2 \vec{r} = 9 \left(\begin{array}{c}1/3 \\ 1/3 \\1\end{array}\right) = \left(\begin{array}{c}3 \\3 \\9\end{array}\right) $ satisfying $\norm{\vec{v}} < 3$ and maximizing $\norm{\vec{v}}$. Again, there is just one such vector, namely $\vec{v} = \left(\begin{array}{c} 1 \\0\end{array}\right)$.

\item We add $\vec{u} + \frac{1}{9}\vec{v} = \left(\begin{array}{c} 1/3 \\0\end{array}\right) + \frac{1}{9}\left(\begin{array}{c} 1 \\0\end{array}\right) = \left(\begin{array}{c} 1/3 + 1/9 \\0\end{array}\right)$ to $\mathcal{L}$.
\end{enumerate}

Continuing this algorithm, we see that the vector $\vec{v}$ found in Step 2 will always be $\left(\begin{array}{c} 1 \\0\end{array}\right)$, and thus at the $e^{\text{th}}$ step, the following vector will be added to $\mathcal{L}$:
\[ \vec{u}_e=\left(\begin{array}{c} 1/3 + 1/9 + 1/27 + \cdots + 1/3^e\\0\end{array}\right). \]

Therefore $\lambda_\vec{b} =\lim_{e \to \infty} \norm{\vec{u}_e} = \frac{1}{2}$.

\subsection{Second Example}

\paragraph{} We now demonstrate a more involved example. In the first example, our work was made easier by the fact that there was only one $\vec{v}$ found in each iteration of Step 2. In general, however, there will likely be several such vectors found, and the magnitudes of the resulting vectors in $\mathcal{L}$ must be compared. \\

For this example, we stay with the same ideal $I$, set $p = 5$, and compute $\lambda_\vec{b}$ for $\vec{b} = (1, 1, 0)$. This will be the smallest Frobenius power of $I$ excluding the monomial $xy$. \\

As before, we initialize $\vec{u} = \left(\begin{array}{c} 0 \\0\end{array}\right)$.

\begin{enumerate}
\item Set $\vec{r} = \vec{b} + \ones - A\vec{u} = \left(\begin{array}{c} 1 \\ 1 \\0\end{array}\right) + \left(\begin{array}{c} 1 \\ 1 \\ 1 \end{array}\right) = \left(\begin{array}{c} 2 \\ 2 \\1 \end{array}\right)$.

\item Find all solutions to $ \left(\begin{array}{cc}2 & 0 \\2 & 3 \\0 & 3\end{array}\right)\vec{v} \prec p^1 \vec{r} = 5 \left(\begin{array}{c} 2 \\ 2 \\1\end{array}\right) = \left(\begin{array}{c}10 \\ 10 \\ 5\end{array}\right) $ satisfying $\norm{\vec{v}} < 5$ and maximizing $\norm{\vec{v}}$. Note that, here there are two possible solutions: $\vec{v}_1 = \left(\begin{array}{c} 3 \\ 1 \end{array}\right)$ and $\vec{v}_2 = \left(\begin{array}{c} 4 \\0\end{array}\right)$. 

\item We then add both $\vec{u}_1= \vec{u} + \frac{1}{5} \vec{v}_1 = \left(\begin{array}{c} 3/5 \\ 1/5\end{array}\right)$ and $\vec{u}_2= \vec{u} + \frac{1}{5} \vec{v}_2 = \left(\begin{array}{c} 4/5 \\ 0\end{array}\right)$ to the set $\mathcal{L}$ of candidates.
\end{enumerate}

The key point is that we need to perform the algorithm for each of the $e$-witnesses in $\mathcal{L}$.  \\

We first perform the algorithm for $\vec{u}_1 = \left(\begin{array}{c} 3/5 \\ 1/5\end{array}\right)$. 

\begin{enumerate}
\item Set $\vec{r} = \vec{b} + \ones - A\vec{u}_1 = \left(\begin{array}{c} 2 \\ 2 \\ 1 \end{array}\right) - \left(\begin{array}{c} 6/5 \\ 9/5 \\ 3/5 \end{array}\right) = \left(\begin{array}{c} 4/5 \\  1/5 \\ 2/5\end{array}\right)$.

\item Find all solutions to $ \left(\begin{array}{cc}2 & 0 \\2 & 3 \\0 & 3\end{array}\right)\vec{v} \prec 5^2 \left(\begin{array}{c} 4/5 \\  1/5 \\ 2/5\end{array}\right) = \left(\begin{array}{c} 20 \\  5 \\ 10\end{array}\right)$ satisfying $\norm{\vec{v}} < 5$ and maximizing $\norm{\vec{v}}$. The $\vec{v}$ we obtain is $ \left(\begin{array}{c} 2 \\ 0 \end{array}\right)$. 

\item Add the vector $\vec{u}_1 + \frac{1}{5^2}\vec{v} =  \left(\begin{array}{c} 3/5 \\ 1/5\end{array}\right) +  \left(\begin{array}{c} 2/25 \\  0\end{array}\right) =  \left(\begin{array}{c} 3/5 \\ 1/5\end{array}\right) =  \left(\begin{array}{c} 17/25 \\ 1/5\end{array}\right)$ to $\mathcal{L}$. Call this vector $\vec{x}$. 
\end{enumerate}

We describe the algorithm a little more briefly for $\vec{u}_2$. 

\begin{enumerate}
\item Set $\vec{r} =  \left(\begin{array}{c} 2/5 \\ 2/5 \\ 0\end{array}\right)$. 

\item Obtain the vector $\vec{v} =  \left(\begin{array}{c} 4 \\ 0 \end{array}\right)$.

\item Add $\vec{u}_2 + \frac{1}{5^2} \left(\begin{array}{c} 4 \\ 0\end{array}\right) =  \left(\begin{array}{c} 4/5 \\ 0\end{array}\right) +  \left(\begin{array}{c} 4/25 \\ 0\end{array}\right) =  \left(\begin{array}{c} 24/25 \\ 0\end{array}\right)$ to $\mathcal{L}$. Call this vector $\vec{y}$. 
\end{enumerate}

In the first case, $\norm{\vec{x}} = 22/25$. In the second case, $\norm{\vec{y}} = 24/25$. Indeed, if we continue running the algorithm as in the second case, we will continue to obtain the vector $\vec{v} =  \left(\begin{array}{c} 4 \\ 0\end{array}\right)$ in Step 2. At the $e^{\text{th}}$ stage, $\mathcal{L}$ will contain the vector: 
\[ \vec{u}_e = \left(\begin{array}{c} 4/5 + 4/25 + 4/125 + \cdots + 4/5^{e} \\ 0\end{array}\right) \]
Therefore $\lambda_\vec{b} = \lim_{e \to \infty} \norm{\vec{u}_e} = 1$.

\end{document}